\let\footnote=\endnote
\newcommand{\Black}[1]{\textbf{\textcolor{black}{#1}}}
\def\argmax{\mathop{\arg\max}}
\begin{document}


\RUNAUTHOR{Yang et al.}

\RUNTITLE{Solving HOP Problems Via MINLP}

\TITLE{Solving Heated Oil Pipeline Problems Via Mixed Integer Nonlinear Programming Approach}

\ARTICLEAUTHORS{%
\AUTHOR{Muming Yang}
\AFF{LSEC, ICMSEC, Academy of Mathematics and Systems Science, Chinese Academy of Sciences, Beijing, 100190, \\
Mathematical Sciences, University of Chinese Academy of Sciences, Beijing, 100049, \EMAIL{ymm@lsec.cc.ac.cn}} 
\AUTHOR{Yakui Huang}
\AFF{Institute of Mathematics, Hebei University of Technology, Tianjin, 300401, \EMAIL{huangyakui2006@gmail.com}}
\AUTHOR{Yu-Hong Dai}
\AFF{LSEC, ICMSEC, Academy of Mathematics and Systems Science, Chinese Academy of Sciences, Beijing, 100190, \\
Mathematical Sciences, University of Chinese Academy of Sciences, Beijing, 100049, \EMAIL{dyh@lsec.cc.ac.cn}}
\AUTHOR{Bo Li}
\AFF{CNPC Key Laboratory of Oil \& Gas Storage and Transportation, PetroChina Pipeline R \& D Center, Langfang, 065000, \EMAIL{libocolby@yeah.net}}
} 

\ABSTRACT{%
It is a crucial problem how to heat oil and save running cost for crude oil transport. This paper strictly formulates such a heated oil pipeline problem as a mixed integer nonlinear programming model. Nonconvex and convex continuous relaxations of the model are proposed, which are proved to be equivalent under some suitable conditions. Meanwhile, we provide a preprocessing procedure to guarantee these conditions. Therefore we are able to design a branch-and-bound algorithm for solving the mixed integer nonlinear programming model to global optimality. To make the branch-and-bound algorithm more efficient, an outer approximation method is proposed as well as the technique of warm start is used. The numerical experiments with a real heated oil pipeline problem show that our algorithm achieves a better scheme and can save 6.83\% running cost compared with the practical scheme.
}%


\KEYWORDS{Heated oil pipeline problem; MINLP; Nonconvex relaxation; Convex relaxation; Branch-and-bound; Outer approximation; Warm start}

\maketitle

%


\section{Introduction}

Crude oil, as the raw material of petroleum products, is critical to the industry and daily life. Before refining, crude oil needs to be transported from oil fields to refineries. According to incomplete statistics, 51\% of the oil around the world is transported via pipelines.
During the transport, it is often necessary to pressurize the oil to keep it run through the whole pipeline safely.
Meanwhile, the oil requires to be heated up in case of congelation and high viscosity. For example, the condensation point of the oil produced in Daqing oilfield of China reaches 32$^\circ$C. In this case, there are not only pumps but also heating furnaces equipped in heated oil pipeline stations (see Fig. \ref{pumpandfurnace}). The energy consumed by heating furnaces is approximately equivalent to 1\% of the oil transported in the pipeline. Therefore even for the long distance heated oil pipeline (HOP), it is crucial to optimize the operation scheme so as to save the transport cost.

\begin{figure}[H]
	\FIGURE
	{\includegraphics[width=0.6\textwidth]{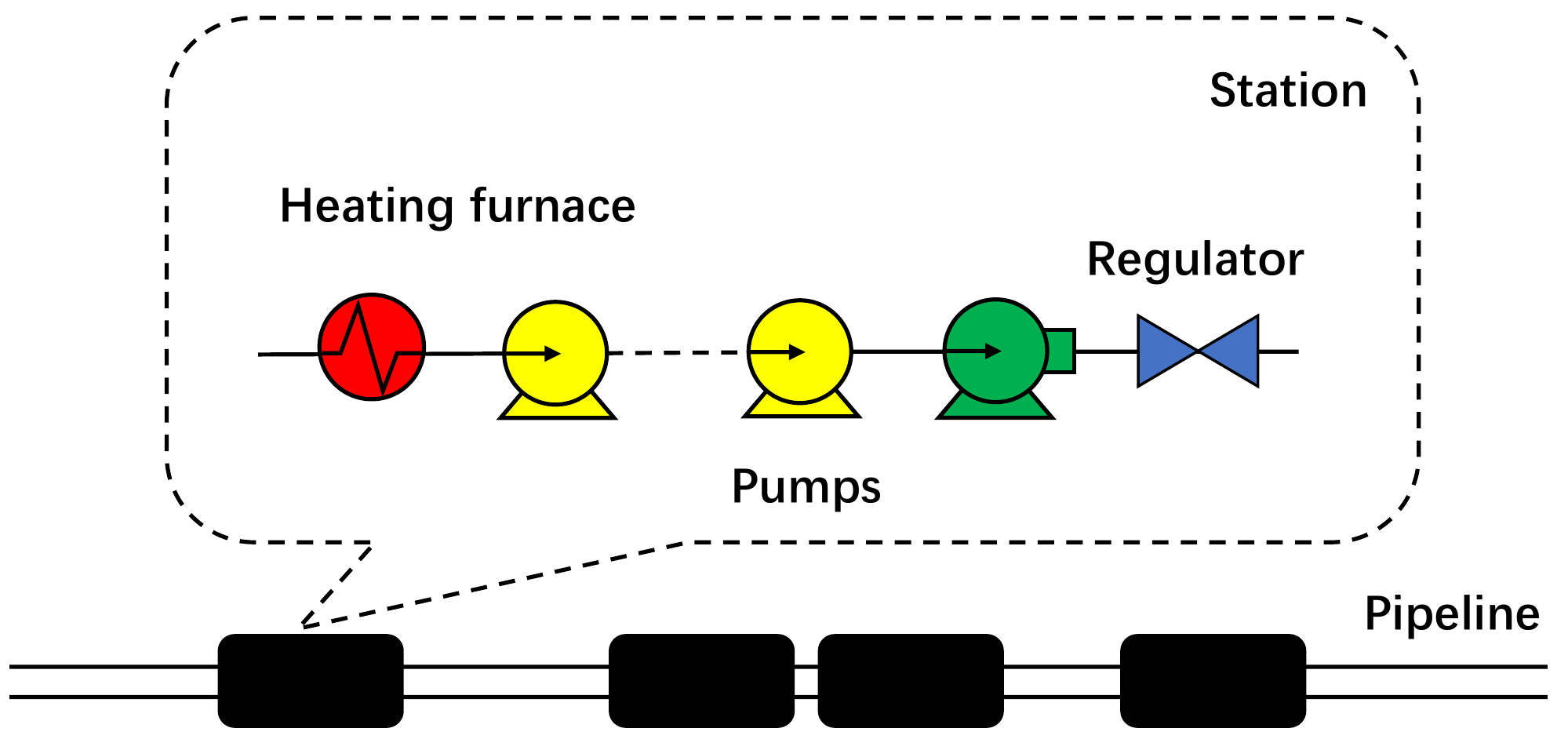}}
	{Pipeline, Station, Heating Furnace, Regulator and Pumps, 
    Including Constant Speed Pumps (Yellow) and Shifted Speed Pumps (Green and with A Speed Controller).\label{pumpandfurnace}}
	{}
\end{figure}

An operation scheme of the HOP mainly consists of pump combination and furnace operation in each station. To meet the safe transport requirements, such a scheme has to satisfy some constraints, such as inlet and outlet pressure and temperature bounds and transition points constraints. In practice, feasible schemes always exist as long as sufficient and proper pressure and heat is provided for the oil flow. However, the costs of different operation schemes may vary tremendously. Higher temperature of the oil consumes more heating cost which, at the same time, usually allows pumps to produce lower pressure for the oil to arrive the destination safely. Thus the optimal total cost is closely related to the combinatorial relation of the two kinds of facilities. The HOP problem is just to figure out a combination of pumps and heating furnaces which satisfies the feasibility requirement and, meanwhile, minimizes the total cost of the power consumption, including both electricity and fuel consumed by the two kinds of facilities.


There are quite a few researches on the oil pipeline problem in the past forty years. To our knowledge, \cite{gopal1980optimizing} gave the first model for the isothermal oil pipeline problem and analyzed the optimal selection of pump combination and discharge pressure via some dynamic and integer programming techniques. In 2001, \citeauthor{jokic2001optimization} proposed another isothermal oil pipeline model via nonlinear programming and tried to optimize the diameters of pipeline for saving running cost. For the HOP problem, \cite{wu1989two,wu1992introduction} designed a two-level hierarchical model based on decomposition and built a software called \verb|HOPOPT|.  \cite{meng2002optimization} implemented a nonlinear programming and station-by-station method to optimize HOP problems. Other studies based on meta-heuristic approaches can be found in \cite{zhou2015dynamic} and \cite{liu2015research}. See \cite{Wang2012A} for a survey on oil/gas pipeline optimization.

Notice that in pumping and heating stations, there are constant speed pumps (CSPs) and shifted speed pumps (SSPs), which increase fixed amounts of pressure and continually variable pressure, respectively. By modeling the on-off status of CSPs and SSPs as integer variables and modeling the head provided by SSPs and the temperature rise of furnaces as continuous variables, this paper shall strictly formulate the HOP problem as a mixed integer nonlinear programming (MINLP) model and consider the algorithmic designing within the branch-and-bound framework \citep{grossmann1997mixed} to find the global optimum. For solving convex MINLP, \cite{quesada1992lp} developed an LP/NLP based branch-and-bound algorithm and \cite{fletcher1994solving} proposed an outer approximation based method. See \verb|Bonmin| (\url{https://www.coin-or.org/Bonmin/}) and \verb|FilMINT| \citep{abhishek2010filmint} for some more convex MINLP solvers.
However, there is an intrinsic difficulty in solving our MINLP model due to the nonconvexity of the head loss constraints. The hydraulic friction of oil flows, as an important component of head loss constraints, is usually highly complicated to evaluate. Based on the Darcy-Weisbach formula \citep{darcy1857recherches}, the hydraulic friction $\mathrm{HF}$ can be calculated by
\begin{equation}\label{darcy}
\mathrm{HF}\left(T, Q, D, L\right) = \beta(T) \frac{Q^{2-m(T)}}{D^{5-m(T)}}\left[\nu\left(T\right)\right]^{m(T)}L,
\end{equation}
where $T$ is the temperature of oil, $Q$ is the volume flow of oil, $D$ is the inner diameter of pipe, $L$ is the length of pipe, $\nu(\cdot)$ represents the kinematic viscosity of oil, and $\beta(\cdot)$ and $m(\cdot)$ are piecewise constant functions. There are some techniques and softwares for handling general nonconvex MINLP, such as piecewise linear approximation, spatial branch-and-bound \citep{horst2013global} and \verb|Couenne| (\url{https://www.coin-or.org/Couenne/}). One can refer to the surveys by \cite{Burer2012Non} and \cite{belotti_kirches_leyffer_linderoth_luedtke_mahajan_2013} for more details. In general, it is hard to achieve the global optimum since the region is nonconvex even if all integer variables are relaxed to continuous ones.

The contribution of this work lies in four folds. Firstly, we establish an MINLP model for HOP problems. Secondly, after analyzing a nonconvex relaxation and a convex relaxation of the original MINLP model, we prove the equivalence of the two relaxations under some conditions and meanwhile, a preprocessing procedure is proposed to guarantee these conditions. This enables us to design a branch-and-bound algorithm to obtain the global optimum in a finite number of iterations.
Thirdly, to improve the efficiency, an outer approximation method is implemented for solving the subproblems as well as some warm start strategy is utilized. Finally, numerical experiments
with a real heated oil pipeline problem are conducted, which show that our algorithm achieves a better scheme and can save 6.83\% running cost compared with the practical scheme.

The rest of this paper is organized as follows. In section \ref{sec_MINLP}, we formulate the HOP problem by an MINLP model after giving some notations. Section \ref{sec_Relaxation_problems} addresses the nonconvex and convex relaxations of the MINLP model and establishes their equivalence under some conditions. Section \ref{sec_Algorithms} provides a preprocessing procedure which guarantees these conditions. Furthermore, combing an outer approximation method and a warm start strategy for solving subproblems, we provide the branch-and-bound algorithm for solving the HOP problem in this section. Numerical experiments are presented in section \ref{sec_Numerical_results} and some concluding remarks are given in section \ref{sec_Conclusions}.


\section{The MINLP Model with A Complexity Analysis}\label{sec_MINLP}

In this section, we first give a brief description of the HOP problem and some basic assumptions and notations. Then we present an MINLP model for the HOP problem.

\subsection{Problem Descriptions, Assumptions and Notations}\label{sec_Notations}
During the transport of heated oil, heat dissipation and friction cause the drop of the oil temperature. Meanwhile, hydraulic friction and elevation difference result in the drop of the oil pressure. To meet the requirements including safe inlet and outlet temperature and pressure of each station and safe pressure at each transition point, it is crucial to maintain proper temperature and pressure of the oil flow via pumps and furnaces in each station. Hence, main decisions to be made in HOP is whether each pump should be powered up, which defines discrete variables, and what temperature should the oil be heated to, which defines continuous variables. The target of the HOP problem is to figure out the most economical scheme among all the feasible decisions. To focus on the main character of HOP problems and illustrate our idea conveniently, necessary assumptions are made for the rest of this paper.
\begin{assumption}\label{assumption_fund}
Suppose the oil flow is in steady state and the influence of friction heat to the temperature of oil is a constant value. Moreover, the following assumptions are made on pumps and furnaces in the pipeline.
\begin{enumerate}[(i)]
	\item The powered up CSPs (if exist) in the same station have the same head value and efficiency;
	\item The SSPs (if exist) in the same station have the same lower, upper head bounds and efficiency;
	\item The heating furnaces in the same station have the same efficiency;
	\item All furnaces consume natural gas;
	\item The thermal load of furnaces is unlimited;
	\item The inlet head and temperature in the first station are given;
	\item There is no pump or furnace in the last station.
\end{enumerate}
\end{assumption}
If not specified, Assumption \ref{assumption_fund} is used in our analysis throughout this paper. It is common that the steady state of oil flow is assumed (see \cite{wu1989two}, \cite{Li2011Operation} and \cite{liu2015research}), in which the volume flow of oil is invariant. For convenience, we specify some settings on pumps and furnaces in the above assumption. Some of them are based on practical situations, such as the assumptions (i), (ii), (iii), (iv), (vi) and (vii). The assumption (vi) shows that the heating furnaces will not consume the oil transported in the pipeline. For the assumption (v), in most cases, the thermal load of furnaces can be limited by the upper bound of outlet temperature in each station. The discussion on friction heat in Assumption \ref{assumption_fund} will take place in the last section of this paper. The head and efficiency assumptions on pumps and furnaces enable us easily to evaluate the operation cost in HOP problems and simplify the solution procedure partly, which is not the point.
For convenience, we describe the constants and variables used for the model description in Tables \ref{table_cnotations} and \ref{table_vnotations}, respectively.
\begin{table}[!h]
	\TABLE
	{Constants Used for Modelling. \label{table_cnotations}}
	{\begin{tabular}{rl}
		\toprule
		$N^S$ & number of station(s) \\
		$N^{CP}_j$ & number of CSP(s) in station $j$ \\
		$N^{SP}_j$ & number of SSP(s) in station $j$ \\
		$N^P_j$ & number of pipe segment(s) between stations $j$ and $j+1$ \\
		$\rho$ & density of oil [kg/m$^3$] \\
		$\pi$ & circumference ratio \\
		$c$ & specific heat of oil [J/(kg$\cdot ^\circ$C)] \\
		$g$ & acceleration of gravity [m/s$^2$] \\
		$C_p$ & unit-price of electricity consumed by pump(s) [yuan/(W$\cdot$s)] \\
		$C_f$ & unit-price of fuel (gas) consumed by heating furnace(s) [yuan/m$^3$] \\
		$V_c$ & heat value of fuel (gas) consumed by heating furnace(s) [J/m$^3$] \\
		$\xi^{CP}_j$ & efficiency of CSP(s) in station $j$ \\
		$\xi^{SP}_j$ & efficiency of SSP(s) in station $j$ \\
		$\eta_j$ & efficiency of heating furnace(s) in station $j$ \\
		$K_{jr}$ & heat transfer coefficient at the $r$-th pipe segment between stations $j$ and $j+1$ [W/(m$^2\cdot ^\circ$C)] \\
		$Q_{jr}$ & volume flow at the $r$-th pipe part between stations $j$ and $j+1$ [m$^3$/h] \\
		$L_{jr}$ & pipe length of the $r$-th pipe segment between stations $j$ and $j+1$ [m] \\
		$D_{jr}$ & inner diameter of the $r$-th pipe segment between stations $j$ and $j+1$ [m] \\
		$d_{jr}$ & outer diameter of the $r$-th pipe segment between stations $j$ and $j+1$ [m] \\
		$\Delta Z_{jr}$ & elevation difference of the $r$-th pipe segment between stations $j$ and $j+1$ [m] \\
		$H^{CP}_j$ & head of CSP(s) in station $j$ [m] \\
		$\underline{H}^{SP}_j (\overline{H}^{SP}_j)$ & minimal (maximal) head of SSP(s) in station $j$ [m] \\
		$T_g^{P_{jr}}$ & ground temperature at the $r$-th pipe segment between stations $j$ and $j+1$ [$^\circ$C] \\
		$T_f^{P_{jr}}$ & temperature changes caused by friction heat at the $r$-th pipe segment between stations $j$ and $j+1$ [$^\circ$C] \\
		$\underline{x}_j (\overline{x}_j)$ & minimal (maximal) number of the powered up CSP(s) in station $j$ \\
		$\underline{y}_j (\overline{y}_j)$ & minimal (maximal) number of the powered up SSP(s) in station $j$ \\
		$\underline{H}_{in}^{S_j} (\underline{H}_{out}^{S_j})$ & lower bound of the inlet (outlet) head in station $j$ [m] \\
		$\overline{H}_{in}^{S_j} (\overline{H}_{out}^{S_j})$ & upper bound of the inlet (outlet) head in station $j$ [m] \\
		$\underline{T}_{in}^{S_j} (\underline{T}_{out}^{S_j})$ & lower bound of the inlet (outlet) temperature in station $j$ [$^\circ$C] \\
		$\overline{T}_{in}^{S_j}(\overline{T}_{out}^{S_j})$ & upper bound of the inlet (outlet) temperature in station $j$ [$^\circ$C] \\
		$\underline{H}_{out}^{P_{jr}}(\overline{H}_{out}^{P_{jr}})$ & lower (upper) bound of the head at the end of the $r$-th pipe segment between stations $j$ and $j+1$ [m] \\
		\bottomrule
	\end{tabular}}
	{}
\end{table}

\begin{table}[!h]
	\TABLE
	{Variables Used for Modelling. \label{table_vnotations}}
	{\begin{tabular}{rl}
		\toprule
		$x_j$ & number of powered up CSP(s) in station $j$ \\
		$y_j$ & number of powered up SSP(s) in station $j$ \\
		$\Delta H^{SP}_j$ & head of shifted speed pump in station $j$ [m] \\
		$\Delta T_j$ & temperature rise in station $j$ [$^\circ$C] \\
		$H_{in}^{S_j}(H_{out}^{S_j})$ & inlet (outlet) head in station $j$ [m] \\
		$T_{in}^{S_j}(T_{out}^{S_j})$ & inlet (outlet) temperature in station $j$ [$^\circ$C] \\
		$H_{out}^{P_{jr}}$ & head at the end of the $r$-th pipe segment between stations $j$ and $j+1$ [m] \\
		$T_{out}^{P_{jr}}$ & temperature at the end of the $r$-th pipe segment between stations $j$ and $j+1$ [$^\circ$C] \\
		$T_{ave}^{P_{jr}}$ & average temperature at the $r$-th pipe segment between stations $j$ and $j+1$ [$^\circ$C] \\
		$F_{jr}$ & hydraulic friction at the $r$-th pipe part between stations $j$ and $j+1$ [m] \\
		\bottomrule
	\end{tabular}}
	{}
\end{table}

Fig. \ref{fig_notations} shows these notations in detail with a pipeline. Note that based on practice, all constants and parameters related to price, efficiency, volume flow, pipe length, pipe diameter, oil head, oil temperature and friction are nonnegative.

\begin{figure}[!h]
\FIGURE
{\includegraphics[width=0.9\textwidth]{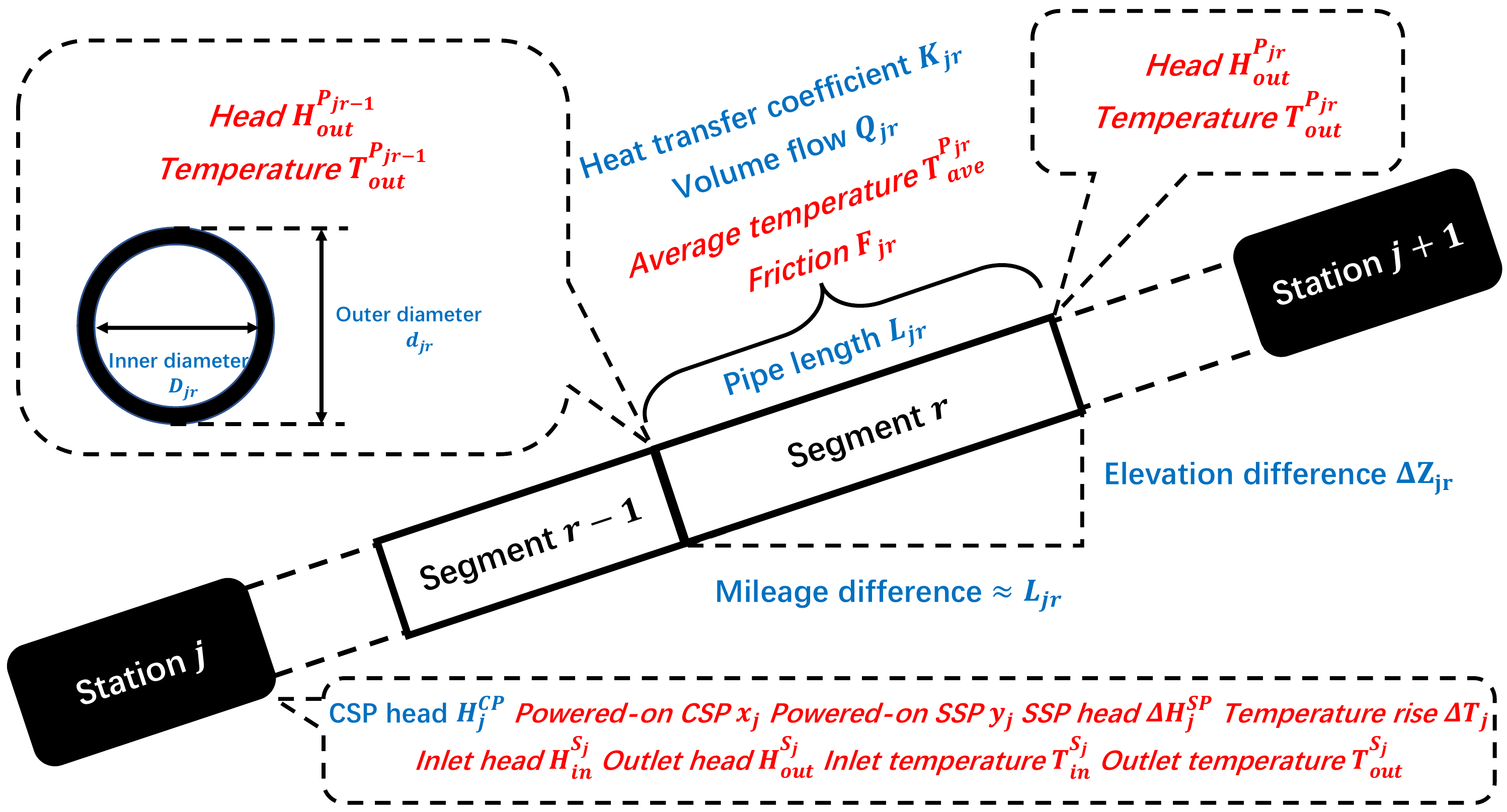}}
{Constants (Blue and Nonitalic) and Variables (Red and Italic) in a Pipeline. \label{fig_notations}}
{}
\end{figure}

\subsection{The MINLP Model with A Complexity Analysis}\label{sec_MINLP_model}

%

Suppose there are $N^S$ stations in the pipeline in our model. In the HOP problem, tracking the head loss and the temperature drop is important and complicated. For the purpose of calculating the variation of head and temperature in the pipeline accurately and limiting heads at transition points properly, the pipe between stations $j$ and $j+1$ is divided into $N^P_j$ segments, $j=1,...,N^S-1$. Then the heads at the two sides of each pipe segment have the following relation,
\begin{align}
\label{constr_hchangepart}
H_{out}^{P_{jr}} &= H_{out}^{P_{j,r-1}} - F_{jr} - \Delta Z_{jr},~ \ \ j=1,...,N^S-1,~ \ \ r=1,...,N^{P}_j,
\end{align}
where $H_{out}^{P_{j,r-1}}$ and $H_{out}^{P_{jr}}$ represent the heads at the start and the end of the $r$-th pipe segment, respectively.  We can see that the head loss in the pipeline consists of two components: the friction $F_{jr}$ and the elevation difference $\Delta Z_{jr}$. Rather than formulating $F_{jr}$ in a specific way shown in the formula \eqref{darcy}, a general nonlinear function $f$ is used in this model as follows
\begin{align}
	\label{constr_nonlinear}
	F_{jr} &= f\left(T_{ave}^{P_{jr}}, Q_{jr}, D_{jr}\right)L_{jr},~ \ \ j=1,...,N^S-1,~ \ \ r=1,...,N^{P}_j.
\end{align}
Here $T_{ave}^{P_{jr}}$ is the average temperature in the $r$-th pipe segment. To figure out $T_{ave}^{P_{jr}}$, we first evaluate $T_{out}^{P_{jr}}$, the temperature at the end of the pipe segment, via the axial temperature drop formula as follows
\begin{equation}
\label{constr_outTpart}
T_{out}^{P_{jr}} = T_{g}^{P_{jr}} + T_f^{P_{jr}} + \left[T_{out}^{P_{j,r-1}} - \left(T_{g}^{P_{jr}} + T_f^{P_{jr}}\right)\right] e^{-\alpha_{jr}L_{jr}},~ \ \ j=1,...,N^S-1,~ \ \ r=1,...,N^{P}_j,
\end{equation}
where $T_{g}^{P_{jr}}$ is the ground temperature, $T_f^{P_{jr}}$ is the temperature change caused by friction heat, $\alpha_{jr}$ is a parameter defined as
\begin{equation*}
\alpha_{jr} = \frac{K_{jr}\pi d_{jr}}{\rho Q_{jr} c},~ \ \ j=1,...,N^S-1,~ \ \ r=1,...,N^{P}_j.
\end{equation*}
In the definition of $\alpha_{jr}$, $K_{jr}$ refers to the heat transfer coefficient, $\rho$ is the density of oil, $c$ denotes the specific heat of oil. The average temperature in the pipe $T_{ave}^{P_{jr}}$ is usually obtained based on production experience. That is,
\begin{equation}
\label{constr_aveTpart}
T_{ave}^{P_{jr}} = \frac{1}{3}T_{out}^{P_{j,r-1}} + \frac{2}{3}T_{out}^{P_{jr}},~ \ \ j=1,...,N^S-1,~ \ \ r=1,...,N^{P}_j.
\end{equation}

To keep the head and pressure on an appropriate level, supplements of pressure and heat are essential. A CSP can only provide a fixed amount of head, while a continually variable pressure can be produced by an SSP. For the pumps in the $j$-th station ($j=1,...,N^S-1$), integer variables $x_j$ and $y_j$ are introduced to represent the number of powered up CSPs and SSPs, respectively. A continuous variable $\Delta H^{SP}_j$ is used to represent the total head produced by SSPs. The upper bound of the outlet head of each station is decided by the inlet head and the total head change in the station. That is,

\begin{align}
\label{constr_inoutH} H_{in}^{S_j} + x_jH^{CP}_j + \Delta H^{SP}_j &\geq H_{out}^{S_j},~ \ \ j = 1,...,N^S-1.
\end{align}
Note that there is a pressure regulator in each station. In case of dangerous outlet pressure, the regulator will be switched on to lower the head of the oil before it flows out the station. Therefore, the constraints in (\ref{constr_inoutH}) are all inequalities.
As a matter of fact, these inequalities reflect the special structure of HOP problems. The head of a single SSP is not only upper but also lower bounded in practice. This limitation can be formulated as
\begin{equation}
	y_j\underline{H}_j^{SP} \leq \Delta H^{SP}_j \leq y_j\overline{H}^{SP}_j,~ \ \ j=1,...,N^S-1.
\end{equation}
For heating furnaces, we denote a continuous variable $\Delta T_j$ as the temperature rise in the $j$-th station ($j=1,...,N^S-1$). Then the outlet temperature can be obtained by
\begin{equation}
\label{constr_inoutT} T_{in}^{S_j} + \Delta T_j = T_{out}^{S_j},~ \ \ j = 1,...,N^S-1.
\end{equation}
To establish the connection between stations and pipes, $T_{out}^{P_{j0}}$ and $H_{out}^{P_{j0}}$ are defined as the outlet temperature and head of the $j$-th station, respectively. Meanwhile, when $r=N^P_j$, let $T_{out}^{P_{jr}}$ and $H_{out}^{P_{jr}}$ be the inlet temperature and head in station $j+1$, respectively. Then we have
\begin{align}
\label{constr_connectT}
T_{out}^{P_{j0}} &= T_{out}^{S_j},~ \ \ T_{out}^{P_{jN^P_j}} = T_{in}^{S_{j+1}},~  \ \ j=1,...,N^S-1,\\
\label{constr_connectH}
H_{out}^{P_{j0}} &= H_{out}^{S_j},~ \ \ H_{out}^{P_{jN^P_j}} = H_{in}^{S_{j+1}},~ \ \ j=1,...,N^S-1.
\end{align}

Moreover, there are also some bound constraints in this model. In each station, the number of powered up CSPs and SSPs are bounded (see \eqref{constr_xbound}, \eqref{constr_ybound}). Only nonnegative temperature rise is allowed (see \eqref{constr_temprisebound}). The inlet and outlet heads and temperatures are also upper and lower bounded by constraints (see \eqref{constr_boundHin}-\eqref{constr_boundTout}). Furthermore, the head restriction is implemented in each pipe segment (see \eqref{constr_transitionHead}). These bound constraints cover the transition point requirement. We summarize all the bound constraints as follows.
\begin{alignat}{2}
\label{constr_xbound}
x_j &\in \{\underline{x}_j, \underline{x}_j+1,..., \overline{x}_j\},~ \ \ &&j=1,...,N^S-1, \\
\label{constr_ybound}
y_j &\in \{\underline{y}_j, \underline{y}_j+1,..., \overline{y}_j\},~ \ \ &&j=1,...,N^S-1, \\
\label{constr_temprisebound}
\Delta T_j &\geq 0,~ \ \ &&j=1,...,N^S-1, \\
\label{constr_boundHin}
\underline{H}_{in}^{S_j} &\leq H_{in}^{S_j} \leq \overline{H}_{in}^{S_j},~ \ \ &&j=1,...,N^S, \\
\label{constr_boundHout}
\underline{H}_{out}^{S_j} &\leq H_{out}^{S_j} \leq \overline{H}_{out}^{S_j},~ \ \ &&j=1,...,N^S-1, \\
\label{constr_boundTin}
\underline{T}_{in}^{S_j} &\leq T_{in}^{S_j} \leq \overline{T}_{in}^{S_j},~ \ \ &&j=1,...,N^S, \\
\label{constr_boundTout}
\underline{T}_{out}^{S_j} &\leq
T_{out}^{S_j} \leq \overline{T}_{out}^{S_j},~ \ \ &&j=1,...,N^S-1, \\
\label{constr_transitionHead} \underline{H}_{out}^{P_{jr}} &\leq H_{out}^{P_{jr}} \leq \overline{H}_{out}^{P_{jr}},~ \ \ &&j=1,...,N^S-1,~ \ \ r=1,...,N^{P}_j-1.
\end{alignat}
Note that based on (vi) in Assumption \ref{assumption_fund}, it is true that
\begin{equation*}
	\underline{H}_{in}^{S_1} = \overline{H}_{in}^{S_1},~ \ \ \underline{T}_{in}^{S_1} = \overline{T}_{in}^{S_1}.
\end{equation*}
Finally, the goal of the HOP problem is to minimize the total cost of pumps and heating furnaces, namely, electricity cost per hour coming from powered up pumps and fuel or gas cost per hour coming from switched on furnaces. More exactly, the total cost function is
\begin{equation}\nonumber
C(x, \Delta H^{SP}, \Delta T) = \sum_{j=1}^{N^S-1}\left[
C_p\rho Q_{j0} g\left(\frac{x_j H^{CP}_j}{\xi^{CP}_{j}}+\frac{\Delta H^{SP}_j}{\xi^{SP}_{j}}\right) + C_fc\rho Q_{j0} \frac{\Delta T_j}{\eta_jV_c}\right],
\end{equation}
where $C_p$ and $C_f$ are the unit prices of electricity and fuel, respectively, $\xi^{CP}$ and $\xi^{SP}$ are the efficiencies of CSPs and SSPs, respectively, $\eta$ is the efficiency of furnaces, $V_c$ is the heating value of the fuel.

Define $z \coloneqq \left(x, y\right)$ to be the vector representing the numbers of powered up CSPs and SSPs.
Meanwhile, let $\Psi$ denote the scheme accumulating all necessary quantities in HOP problems
\begin{equation*}
	\Psi \coloneqq \left(z, \Delta H^{SP}, \Delta T, H_{in}^S, H_{out}^S, T_{in}^S, T_{out}^S, H_{out}^P, T_{out}^P, T_{ave}^P, F\right).
\end{equation*}
Then in a simple way, we can formulate the HOP problem into the following MINLP model
\begin{equation*}
\text{(HOP($\underline{z}$, $\overline{z}$))}~~ \ \
	\begin{aligned}
	\min_{\Psi} \quad & C(x, \Delta H^{SP}, \Delta T) \\
	\text{s.t.} ~~~  & (\ref{constr_hchangepart}) - (\ref{constr_transitionHead}),
	\end{aligned}
\end{equation*}
where
	$\underline{z} = (\underline{x}, \underline{y})$ and $\overline{z} = \left(\overline{x}, \overline{y}\right).$
In the original HOP problem, we have that
\begin{equation*}
	\underline{z} = \left(0,0\right),~ \ \  \overline{z}=\left(N^{CP}, N^{SP}\right).
\end{equation*}
If the bounds of $x$ and $y$ are not specified, we just denote the above model by (HOP).

\begin{remark}\label{remark_sHOP}
Each choice of $s \coloneqq (z, \Delta H^{SP}, \Delta T, H_{out}^{S})$ defines a unique scheme $\Psi$ of (HOP) satisfying
all equality constraints. Specifically, $s$ defines a unique feasible scheme $\Psi$ of (HOP) if the scheme $\Psi$ satisfies all inequality constraints.
\end{remark}
Based on Remark \ref{remark_sHOP}, $s$ is regarded as a feasible solution of (HOP) if $s$ defines a feasible scheme $\Psi$ of (HOP). In general, MINLP problems are $\mathcal{NP}$-hard. The following proposition verifies the $\mathcal{NP}$-hardness of the problem (HOP) via the cutting stock problem
 (all proofs can be seen in the Appendix).
\begin{proposition}\label{prop_np}
	The problem (HOP) is $\mathcal{NP}$-hard.
\end{proposition}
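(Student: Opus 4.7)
The plan is to give a polynomial-time reduction from the cutting stock problem, a classical $\mathcal{NP}$-hard combinatorial problem, to the decision version of (HOP). Given any cutting stock instance, I would construct a polynomial-sized (HOP) instance together with a threshold $K^{*}$ such that the optimal cost of (HOP) is at most $K^{*}$ if and only if the cutting stock instance is a yes-instance. Since cutting stock is $\mathcal{NP}$-hard and the reduction runs in polynomial time, the $\mathcal{NP}$-hardness of (HOP) would then follow.

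To isolate the combinatorial core of (HOP) I would neutralize its continuous components. Concretely, I would set $\overline{y}_j = 0$ to forbid SSPs; choose the temperature bounds so that $\Delta T_j = 0$ is forced and the temperature propagates deterministically through \eqref{constr_outTpart}--\eqref{constr_aveTpart}; and select trivial elevation differences together with a friction function $f$ whose value on the prescribed temperatures is a chosen constant. With these choices the head loss in every segment becomes a constant, and the head-bound constraints \eqref{constr_boundHout}--\eqref{constr_transitionHead} reduce to bounds of the form $L_j \leq \sum_{i \leq j} x_i H_i^{CP} \leq U_j$. The total cost then collapses to $C_p\rho Q g \sum_j x_j H_j^{CP}/\xi^{CP}_j$, so the problem becomes a purely integer program over the variables $x_j \in \{\underline{x}_j, \dots, \overline{x}_j\}$.

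It remains to calibrate $H_j^{CP}$, $\overline{x}_j$, and the head bounds so that feasible integer selections correspond exactly to valid cutting patterns and the optimal cost recovers the minimum number of stock rolls. The main obstacle is Assumption~\ref{assumption_fund}(i), which forces all powered-up CSPs in a station to share a single head value; this forces a one-to-one correspondence between stations and distinct item types, and the chain of head bounds must be chosen carefully so that each per-station bound encodes the stock-length constraint while the cumulative bound across stations encodes the target roll count. Once this bookkeeping is set up consistently and both directions of the equivalence are verified, the proposition follows.
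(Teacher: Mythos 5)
Your overall strategy matches the paper's: strip (HOP) down to its integer core by switching off SSPs, pinning the temperatures, and making the head loss constant, and then connect the resulting integer program to the cutting stock problem of Gilmore and Gomory. The gap is in the final encoding step. You claim the head-bound constraints reduce to two-sided bounds $L_j \leq \sum_{i\leq j} x_i H_i^{CP} \leq U_j$, and you lean on the upper bounds to encode the stock-length (packing) constraints. But the station head balance \eqref{constr_inoutH} is an \emph{inequality} precisely because of the regulator: $H_{out}^{S_j}$ is only bounded above by $H_{in}^{S_j}+x_jH_j^{CP}$, so any excess pump head can be dissipated and the upper bounds in \eqref{constr_boundHout}--\eqref{constr_transitionHead} never constrain the cumulative quantity $\sum_{i\leq j}x_iH_i^{CP}$. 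Only \emph{lower} bounds on cumulative pump head are enforceable. Moreover, even granting two-sided bounds, a single counter $x_j$ per station cannot represent the assignment of items to rolls, which is where the hardness of the bin-packing-style cutting stock formulation actually lives; "setting up the bookkeeping consistently" is exactly the step that does not go through.

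The paper sidesteps both issues by targeting the pattern-based (Gilmore--Gomory) formulation, which needs only a \emph{single covering constraint}. Setting $L_{jr}=\Delta Z_{jr}=0$, fixing the inlet heads of the first and last stations with $\underline{H}_{in}^{S_1}-\underline{H}_{in}^{S_{N^S}}=V>0$, and leaving all intermediate head bounds vacuous collapses (HOP) to
\begin{equation*}
\min_x \ \sum_{j=1}^{N^S-1} C_p\rho Q_{j0}g\frac{H_j^{CP}}{\xi_j^{CP}}x_j \quad \text{s.t.}\quad \sum_{j=1}^{N^S-1}x_jH_j^{CP}\geq V,\ \ x_j\in\{0,\dots,N_j^{CP}\},
\end{equation*}
where $x_j$ is interpreted as the number of times pattern $j$ is used and $H_j^{CP}$ as the yield of that pattern. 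This single-row covering integer program is already $\mathcal{NP}$-hard, and it uses only lower-bound head information, which the regulator cannot destroy. If you replace your per-roll packing encoding with this one-constraint covering reduction, your argument closes.
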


Although the objective function and the remaining constraints are linear, the constraints \eqref{constr_nonlinear} are usually nonconvex and hence (HOP) is a nonconvex MINLP model. For a general nonconvex MINLP problem, it is difficult to find even the local optimum of its continuous relaxation problem. On the other hand, it is highly expected in practice if we could obtain the global optimum of (HOP). In the next sections, we
shall show that it is possible to seek the global optimum of (HOP) via a branch-and-bound method after some careful analysis.

\section{Nonconvex and Convex Relaxations and Their Equivalence}\label{sec_Relaxation_problems}

To treat the difficulties introduced by integer variables and nonconvex constraints, we propose both a nonconvex nonlinear relaxation and a convex nonlinear relaxation of the problem (HOP). The properties of the two relaxations will be discussed. Specifically, we prove that the two relaxations are equivalent under some conditions.

\subsection{A Nonconvex Nonlinear Relaxation of (HOP)}

In general, the continuous problem is more trackable than the discrete problem. By relaxing integer variables $x$ and $y$ in (HOP) to continuous ones, namely,
\begin{align}
	\label{constr_relaxxbound}
	x_j \in \left[\underline{x}_j, \overline{x}_j\right],~ \ \ j=1,...,N^S-1, \\
	\label{constr_relaxybound}
	y_j \in \left[\underline{y}_j, \overline{y}_j\right],~ \ \ j=1,...,N^S-1,
\end{align}
we get the following nonconvex relaxation of (HOP),
	\begin{equation*}
	\text{(HOPnr1($\underline{z},\overline{z}$))}~~ \ \
	\begin{aligned}
	\min_{\Psi} \quad & C(x, \Delta H^{SP}, \Delta T) \\
	\text{s.t.} ~~~  & (2)-(10), (13)-(20).
	\end{aligned}
	\end{equation*}
This is a continuous nonlinear programming (NLP) problem.

\begin{remark}\label{remark_s1}
	Each choice of $s \coloneqq (z, \Delta H^{SP}, \Delta T, H_{out}^{S})$ defines a unique scheme $\Psi$ of (HOPnr1) satisfying all equality constraints. Specifically, $s$ defines a unique feasible scheme $\Psi$ of (HOPnr1) if $\Psi$ satisfies all inequality constraints.
\end{remark}
Similar to (HOP), we regard $s$ as the solution vector and denote (HOPnr1) as the above model in short. Comparing with the original problem (HOP), (HOPnr1) has no integer variables any more and hence various methods for solving NLP can be applied \citep{bertsekas1997nonlinear}. For example, some kind of interior point method (IPM) can be implemented in solving (HOPnr1) directly if it is a smooth problem. However, it is difficult to achieve the global optimum of (HOPnr1) due to the nonconvexity of the constraints \eqref{constr_nonlinear}, which is a bad news for obtaining lower bounds of (HOP).

Fortunately, although we can not get the lower bound of (HOP) by solving (HOPnr1), each feasible solution of the relaxation will produce a feasible solution of the original problem (HOP), as shown in the following proposition.
\begin{proposition}\label{prop_upperbound}
	(HOP($\underline{z}$, $\overline{z}$)) is feasible if and only if (HOPnr1($\underline{z}$, $\overline{z}$)) is feasible. Particularly, assuming that $\check{s} \coloneqq (\check{z}, \Delta\check{H}^{SP}, \Delta\check{T}, \check{H}_{out}^S)$ is feasible to (HOPnr1($\underline{z}$, $\overline{z}$)), there exists a vector $\hat{s} \coloneqq (\hat{z}, \Delta \hat{H}^{SP}, \Delta \hat{T}, \hat{H}_{out}^S),$ where
	\begin{equation*}
		\begin{aligned}
		\hat{x} &= \lceil \check{x} \rceil,~ \ \
		\hat{y} = \lceil \check{y} \rceil,~ \ \
		\Delta \hat{T} = \Delta \check{T},~ \ \
		\hat{H}^S_{out} = \check{H}^S_{out}, \\
		\Delta \hat{H}_j^{SP} &= \left\{
			\begin{aligned}
				& \Delta \check{H}_j^{SP}, && ~\text{if\ }~ \hat{y}_j\underline{H}^{SP}_j \leq \Delta \check{H}_j^{SP} \leq \hat{y}_j\overline{H}^{SP}_j; \\
				& \hat{y}_j\underline{H}^{SP}_j, && ~\text{otherwise},	
			\end{aligned}
			\right.~ \ \ j=1,\dots,N^S-1,
		\end{aligned}
	\end{equation*}
	such that $\hat{s}$ is feasible to (HOP($\underline{z}$, $\overline{z}$)).
\end{proposition}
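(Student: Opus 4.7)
The plan is to handle the two directions separately. The ``if'' direction is immediate: (HOPnr1) relaxes only the integrality of $x$ and $y$, so any feasible scheme of (HOP) already satisfies the weaker continuous bounds \eqref{constr_relaxxbound}--\eqref{constr_relaxybound}. The substantive content lies in the ``only if'' direction, which I would prove constructively by verifying that the explicitly defined $\hat{s}$ satisfies every constraint of (HOP).

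The first step is to invoke Remark \ref{remark_s1} to note that the full scheme $\Psi$ is determined by the reduced tuple $s=(z,\Delta H^{SP},\Delta T,H^S_{out})$ through the equality constraints \eqref{constr_hchangepart}--\eqref{constr_aveTpart}, \eqref{constr_inoutT}--\eqref{constr_connectH} and the fixed boundary data $H^{S_1}_{in},T^{S_1}_{in}$ from Assumption \ref{assumption_fund}(vi). Since $\hat{s}$ agrees with $\check{s}$ on $\Delta T$ and $H^S_{out}$, a straightforward induction across stations shows that $\hat{\Psi}$ and $\check{\Psi}$ coincide on all temperature and head quantities: \eqref{constr_inoutT} forces $\hat{T}^S_{out}=\check{T}^S_{out}$, then \eqref{constr_connectT}--\eqref{constr_connectH} together with \eqref{constr_hchangepart}--\eqref{constr_aveTpart} propagate the equality through each pipe, and \eqref{constr_connectT}--\eqref{constr_connectH} transfer the pipe-end values to the inlet of the next station. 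Consequently the bound constraints \eqref{constr_temprisebound}--\eqref{constr_transitionHead} hold automatically for $\hat{\Psi}$, and the integer bounds \eqref{constr_xbound}--\eqref{constr_ybound} hold because $\underline{x}_j,\overline{x}_j,\underline{y}_j,\overline{y}_j$ are integers, so that $\lceil\check{x}_j\rceil$ and $\lceil\check{y}_j\rceil$ lie in the required ranges.

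The only constraints that can be affected by the rounding are the head-balance inequality \eqref{constr_inoutH} and the SSP head bound, and this is precisely what the case split in the definition of $\Delta\hat{H}^{SP}_j$ addresses. Feasibility of $\check{s}$ already gives $\Delta\check{H}^{SP}_j \leq \check{y}_j\overline{H}^{SP}_j \leq \hat{y}_j\overline{H}^{SP}_j$, so the upper SSP bound is always safe; only the lower bound can be broken by rounding $\check{y}_j$ upward. The first case of the definition covers the indices where the lower bound survives, while the second case replaces $\Delta\hat{H}^{SP}_j$ by $\hat{y}_j\underline{H}^{SP}_j$, which trivially satisfies both SSP bounds. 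In either case $\Delta\hat{H}^{SP}_j \geq \Delta\check{H}^{SP}_j$, so combined with $\hat{x}_j \geq \check{x}_j$ the inequality \eqref{constr_inoutH} only tightens under the rounding and is therefore preserved. The main delicacy is precisely this identification of the SSP lower bound as the sole constraint threatened by upward rounding, together with the verification that the repair value $\hat{y}_j\underline{H}^{SP}_j$ is large enough not to undo \eqref{constr_inoutH}.
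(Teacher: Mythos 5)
Your proposal is correct and follows essentially the same route as the paper: the trivial direction via relaxation, then the constructive verification that rounding up $x$ and $y$ only strengthens the head-balance inequality \eqref{constr_inoutH} while the repair $\hat{y}_j\underline{H}^{SP}_j$ restores the SSP bound without decreasing $\Delta H^{SP}_j$, all other scheme quantities being unchanged since $\Delta T$ and $H^S_{out}$ are kept. (One cosmetic slip: you have the ``if'' and ``only if'' labels swapped relative to the statement --- the relaxation argument proves the ``only if'' direction --- but the mathematical content of both directions is handled correctly.)
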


Proposition \ref{prop_upperbound} tells us, as long as we have a feasible solution of (HOPnr1), we can obtain an upper bound for (HOP) with little computational efforts. Therefore, the nodes in the branch-and-bound tree can be pruned with upper bounds obtained at the early stage. Such property is likely to reduce the amount of calculations.

Nevertheless, we have to face the fact that the global optimum of (HOPnr1) is hardly achievable due to nonconvexity.
In the next subsection, we derive another relaxation by relaxing the nonlinear nonconvex constraints.

\subsection{A Convex Nonlinear Relaxation of (HOP)}

To proceed, we make the following assumption on the function $f$ in (HOP).
\begin{assumption}\label{assup_fconvex}
	Given $Q_{jr} > 0$ and $D_{jr} > 0$, the function $f$ is convex and monotonically decreasing about $T_{ave}^{P_{jr}} > 0$.
\end{assumption}

Usually, the kinematic viscosity $\nu$ is convex and monotonically decreasing about the oil temperature. In addition, the oil flow is usually hydraulic smooth in practice. Recalling the friction formula \eqref{darcy}, it means that in the hydraulic smooth case, the parameters $\beta$ and $m$ are in certain constant pieces ($\beta \equiv 0.0246$, $m \equiv 0.25$) and therefore the friction $F$ has the same convexity and monotonicity with viscosity $\nu$. Thus Assumption \ref{assup_fconvex} on the oil friction sounds reasonable from practical experiences.

Even if the function $f$ is convex, the constraints (\ref{constr_nonlinear}) are still nonconvex since they are equality constraints. By relaxing these equalities to inequalities,
\begin{equation}
	\label{constr_newnonlinear} F_{jr} \geq f\left(T_{ave}^{P_{jr}}, Q_{jr}, D_{jr}\right)L_{jr},~ \ \ j=1,...,N^S-1,~ \ \ r=1,...,N^{P}_j,
\end{equation}
we obtain the following convex relaxation of (HOP).
	\begin{equation*}
	\text{(HOPnr2($\underline{z}$, $\overline{z}$))}~~ \ \
	\begin{aligned}
	\min_{\Psi} \quad & C(x, \Delta H^{SP}, \Delta T) \\
	\text{s.t.} ~~~  & (2), (4)-(10), (13)-(21).
	\end{aligned}
	\end{equation*}

The above problem is simply called as (HOPnr2) as before if the bounds of $z$ are not specified.
Since the nonlinear equality constraints become inequality constraints after the convex relaxation, the following remark is slightly different from the ones for (HOP) and (HOPnr1).
\begin{remark}\label{remark_s2}
	Each choice of $s \coloneqq (z, \Delta H^{SP}, \Delta T, H_{out}^{S}, F)$ defines a unique scheme $\Psi$ of (HOPnr2) satisfying all equality constraints. Specifically, $s$ defines a unique feasible scheme $\Psi$ of (HOPnr2) if $\Psi$ satisfies all inequality constraints.
\end{remark}

Since (HOPnr2) is a convex NLP problem, its any local minimizer is also globally optimal. Thus the corresponding objective function value can be used as a lower bound of (HOP). This indicates that (HOPnr2) is suitable for the relaxations of subproblems in the branch-and-bound tree.

However, since (HOPnr2) relaxes constraints \eqref{constr_nonlinear}, it is unfortunate that not only a property similar to Proposition \ref{prop_upperbound} does not hold for the new relaxation, but also a feasible $s$ of (HOPnr2) satisfying the constraints \eqref{constr_xbound} and \eqref{constr_ybound} may not be feasible to (HOP). That is, upper bounds of (HOP) are usually unachievable via the solution of (HOPnr2).
Surprisingly, the convex relaxation (HOPnr2) is equivalent to the nonconvex one (HOPnr1) under some conditions, as shown in the next subsection.


\subsection{The Equivalence of Two Relaxations}

To establish the equivalence between the nonconvex relaxation (HOPnr1) and the convex relaxation (HOPnr2), we first give the following lemmas. They illustrate the order preservation of the temperature and the conditions, which lay foundation for the equivalence theorem.

\begin{lemma}\label{lem_monoT}
	Suppose $\check{s}$ and $\hat{s}$ are feasible to (HOPnr2). For any $j=1,...,N^S-1$, if $\check{T}_{out}^{S_j} \geq \hat{T}_{out}^{S_j}$, we have that
	\begin{equation*}
		\check{T}_{ave}^{P_{jr}} \geq \hat{T}_{ave}^{P_{jr}},~ \ \ r=0,...,N^P_j.
	\end{equation*}
\end{lemma}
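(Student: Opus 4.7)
The plan is to prove the pointwise inequality $\check{T}_{out}^{P_{jr}} \geq \hat{T}_{out}^{P_{jr}}$ for every $r=0,1,\ldots,N^P_j$ by induction on $r$, and then invoke the averaging identity \eqref{constr_aveTpart} to propagate the inequality to $T_{ave}^{P_{jr}}$. The crux is that the axial-drop recurrence \eqref{constr_outTpart} is affine in its predecessor with a \emph{positive} coefficient, and that the defining constants ($\alpha_{jr}$, $L_{jr}$, $T_g^{P_{jr}}$, $T_f^{P_{jr}}$) depend only on the problem data (see Table \ref{table_cnotations}), not on the particular feasible point.

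For the base case $r=0$, the connecting identity \eqref{constr_connectT} yields $T_{out}^{P_{j0}} = T_{out}^{S_j}$ at both $\check{s}$ and $\hat{s}$, so the hypothesis $\check{T}_{out}^{S_j} \geq \hat{T}_{out}^{S_j}$ translates immediately into $\check{T}_{out}^{P_{j0}} \geq \hat{T}_{out}^{P_{j0}}$. For the inductive step, I would rewrite \eqref{constr_outTpart} in the form
\[
T_{out}^{P_{jr}} \;=\; \bigl(1 - e^{-\alpha_{jr} L_{jr}}\bigr)\bigl(T_g^{P_{jr}} + T_f^{P_{jr}}\bigr) \;+\; e^{-\alpha_{jr} L_{jr}}\, T_{out}^{P_{j,r-1}},
\]
and observe that the slope $e^{-\alpha_{jr} L_{jr}}$ is strictly positive while the intercept is a fixed constant shared by both solutions. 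Applying this to $\check s$ and $\hat s$ and subtracting then gives
\[
\check{T}_{out}^{P_{jr}} - \hat{T}_{out}^{P_{jr}} \;=\; e^{-\alpha_{jr} L_{jr}}\bigl(\check{T}_{out}^{P_{j,r-1}} - \hat{T}_{out}^{P_{j,r-1}}\bigr) \;\geq\; 0
\]
by the induction hypothesis.

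To finish, I would apply \eqref{constr_aveTpart}, which expresses $T_{ave}^{P_{jr}}$ as the convex combination $\tfrac{1}{3} T_{out}^{P_{j,r-1}} + \tfrac{2}{3} T_{out}^{P_{jr}}$; taking the same nonnegative combination of the two inequalities just established yields $\check{T}_{ave}^{P_{jr}} \geq \hat{T}_{ave}^{P_{jr}}$ for $r=1,\ldots,N^P_j$, and the $r=0$ endpoint (if interpreted at all) reduces to the base case under the natural convention $T_{ave}^{P_{j0}} := T_{out}^{P_{j0}}$.

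I do not expect any genuine obstacle here. The argument is essentially monotonicity of an affine one-dimensional recurrence with a positive multiplier, and the only subtlety worth flagging is that neither the equality \eqref{constr_outTpart} nor the parameter $\alpha_{jr}$ was relaxed in passing from (HOP) to (HOPnr2) — so the inductive identity genuinely holds with equality at both $\check s$ and $\hat s$, and no slackness from the relaxed friction constraint \eqref{constr_newnonlinear} can interfere with the temperature recursion.
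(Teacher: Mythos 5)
Your proof is correct and follows essentially the same route as the paper's: induction on $r$ to establish $\check{T}_{out}^{P_{jr}} \geq \hat{T}_{out}^{P_{jr}}$ via the positive-coefficient affine recurrence \eqref{constr_outTpart}, with the base case from \eqref{constr_connectT} and the conclusion from the convex combination in \eqref{constr_aveTpart}. Your added remark that the temperature recursion is unaffected by the relaxation of \eqref{constr_nonlinear} is a correct and worthwhile clarification, but not a different argument.
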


\begin{lemma}\label{lem_sameopt}
Suppose Assumption \ref{assup_fconvex} holds. For each $j=1,...,N^S-1$, if there exists a feasible solution  $\tilde{s} = (\tilde{z}, \Delta \tilde{H}^{SP}, \Delta \tilde{T}, \tilde{H}_{out}^{S})$ of (HOPnr1) such that the $\tilde T_{out}^S$ defined by $\tilde{s}$ satisfies
\begin{equation*}
	\tilde{T}_{out}^{S_j} = \overline{T}_{out}^{S_j},
\end{equation*}
then (HOPnr2) is feasible. Moreover, for each feasible solution  $\check{s} = (\check{z}, \Delta \check{H}^{SP}, \Delta \check{T}, \check{H}_{out}^{S}, \check{F})$ of (HOPnr2), there exists a feasible solution $\hat{s} = (\hat{z}, \Delta \hat{H}^{SP}, \Delta \hat{T}, \hat{H}_{out}^{S})$ of (HOPnr1) such that
\begin{equation*}
	C(\check{x}, \Delta \check{H}^{SP}, \Delta \check{T}) \geq C(\hat{x}, \Delta \hat{H}^{SP}, \Delta \hat{T}).
\end{equation*}
\end{lemma}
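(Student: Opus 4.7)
The plan splits along the two claims of the lemma.

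For the feasibility of (HOPnr2), the argument is essentially set inclusion. Starting from any feasible $\tilde{s}$ of (HOPnr1), I would define $\tilde{F}_{jr} := f(\tilde{T}_{ave}^{P_{jr}}, Q_{jr}, D_{jr})L_{jr}$, so that (3) holds with equality and hence (21) is satisfied. The remaining constraints of (HOPnr2)---namely (2), (4)--(10) and (13)--(20)---are shared with (HOPnr1) and thus hold for $\tilde{s}$, making $(\tilde{s},\tilde{F})$ feasible for (HOPnr2). Only the bare feasibility of $\tilde{s}$ is used here; the condition $\tilde{T}_{out}^{S_j} = \overline{T}_{out}^{S_j}$ is reserved for the moreover claim.

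For the moreover claim, given $\check{s}$ feasible to (HOPnr2), I would inherit the discrete and heating variables: $\hat{z} := \check{z}$, $\Delta \hat{H}^{SP} := \Delta \check{H}^{SP}$, $\Delta \hat{T} := \Delta \check{T}$. The cost identity $C(\hat{x}, \Delta \hat{H}^{SP}, \Delta \hat{T}) = C(\check{x}, \Delta \check{H}^{SP}, \Delta \check{T})$ is then immediate, which already yields the required inequality. Because $T_{in}^{S_1}$ is fixed by Assumption \ref{assumption_fund}(vi) and the temperature equations (4), (5), (8), (9) are deterministic in $\Delta T$, an induction over stations shows that every temperature variable in $\hat{s}$ coincides with that in $\check{s}$; consequently the bounds (13), (16), (17) are inherited, and the frictions $\hat{F}_{jr} := f(\hat{T}_{ave}^{P_{jr}}, Q_{jr}, D_{jr})L_{jr}$ satisfy $\hat{F}_{jr} \leq \check{F}_{jr}$ thanks to (21).

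It then remains to pick $\hat{H}_{out}^S$ so that the head constraints (2), (6), (10), (14), (15) and (18) all hold. Proceeding inductively on $j$ (starting from the fixed $\hat{H}_{in}^{S_1}$), the pipe-$j$ head profile is an affine function of $\hat{H}_{out}^{S_j}$ with slope $1$, so each bound carves out an interval for $\hat{H}_{out}^{S_j}$. Since $\hat{F}_{jr} \leq \check{F}_{jr}$, the tentative choice $\hat{H}_{out}^{S_j} := \check{H}_{out}^{S_j}$ yields pipe heads no smaller than those in $\check{s}$, so the lower bounds hold automatically; any upper-bound violation is to be repaired by reducing $\hat{H}_{out}^{S_j}$ through the pressure regulator, which is permitted by the inequality (6).

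The main obstacle is proving that this feasible interval is non-empty at every station, i.e., that the reduction required to enforce upper head bounds does not push $\hat{H}_{out}^{S_j}$ (or any downstream $\hat{H}_{out}^{P_{jr}}$) below its lower bound. Here the reference $\tilde{s}$ with $\tilde{T}_{out}^{S_j} = \overline{T}_{out}^{S_j}$ becomes essential: since $\tilde{s}$ is also feasible to (HOPnr2), Lemma \ref{lem_monoT} gives $\check{T}_{ave}^{P_{jr}} \leq \tilde{T}_{ave}^{P_{jr}}$, and the monotonicity of $f$ from Assumption \ref{assup_fconvex} then yields $\hat{F}_{jr} \geq \tilde{F}_{jr}$. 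Thus the feasibility of $\tilde{s}$ at the minimum achievable friction profile delivers precisely the slack in the upper head bounds needed for our correction to stay within the lower-bound admissible range. A careful comparison of the pipe-$j$ head profiles at the temperatures of $\check{s}$ and $\tilde{s}$ via this monotonicity closes the construction.
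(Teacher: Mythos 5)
Your proposal is correct and follows essentially the same route as the paper: keep $\check z$, $\Delta\check H^{SP}$, $\Delta\check T$ (so the cost is unchanged), recompute the frictions with equality so that $\hat F_{jr}\le\check F_{jr}$, and repair any upper head-bound violations by shifting the pipe-$j$ head profile down, using Lemma \ref{lem_monoT} and the monotonicity of $f$ to get $\hat F_{jr}\ge\tilde F_{jr}$ and hence dominate $\tilde s$'s feasible profile on the affected stretch. The paper's proof makes your final ``careful comparison'' explicit by anchoring the shifted profile at the point $r_1$ of maximal upper-bound violation, set to $\max\{\check H_{out}^{P_{jr_1}},\tilde H_{out}^{P_{jr_1}}\}$, and comparing against $\tilde s$ upstream and $\check s$ downstream of $r_1$ --- exactly the two inequalities you identified.
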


\begin{theorem}\label{thm_equivalence}
	Under the conditions of Lemma \ref{lem_sameopt}, (HOPnr1) and (HOPnr2) have the same optimal objective value.
\end{theorem}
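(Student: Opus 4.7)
The theorem is essentially a direct corollary of Lemma \ref{lem_sameopt}, so my plan is to assemble the pieces carefully and verify the ``easy'' reverse inequality by hand. Let $v_1$ and $v_2$ denote the optimal values of (HOPnr1) and (HOPnr2), respectively; both are well-defined because the hypothesis of Lemma \ref{lem_sameopt} supplies a feasible solution of (HOPnr1), and that lemma additionally guarantees feasibility of (HOPnr2).

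For the easier direction $v_2 \leq v_1$, I would start from any feasible $\tilde{s}$ of (HOPnr1). By Remark \ref{remark_s1} it determines a unique scheme $\tilde{\Psi}$ meeting the nonlinear equality \eqref{constr_nonlinear} with a specific friction vector $\tilde{F}$. Appending $\tilde{F}$ to $\tilde{s}$ yields a point feasible for (HOPnr2): the equality \eqref{constr_nonlinear} trivially implies the inequality \eqref{constr_newnonlinear}, and every remaining constraint of (HOPnr2) is literally a constraint of (HOPnr1). Because the objective $C$ depends only on $(x,\Delta H^{SP},\Delta T)$, its value is preserved, so taking $\tilde{s}$ to be optimal for (HOPnr1) shows $v_2 \leq v_1$.

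For the reverse inequality $v_1 \leq v_2$, I would take an optimal solution $\check{s}^{\ast}$ of (HOPnr2) — so that $C(\check{s}^{\ast}) = v_2$ — and invoke the second half of Lemma \ref{lem_sameopt}, which produces a feasible $\hat{s}$ of (HOPnr1) with $C(\hat{s}) \leq C(\check{s}^{\ast})$. Hence $v_1 \leq C(\hat{s}) \leq v_2$, and combining with the earlier inequality gives $v_1 = v_2$. The only real hazard is if (HOPnr2) fails to attain its infimum; in that case I would apply Lemma \ref{lem_sameopt} along a minimizing sequence and pass to a limit via a standard compactness argument using the bound constraints \eqref{constr_xbound}--\eqref{constr_transitionHead} together with continuity of $C$. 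Since all of the substantive work is buried in Lemmas \ref{lem_monoT} and \ref{lem_sameopt}, the theorem itself reduces to this short bookkeeping, and I do not expect any real obstacle.
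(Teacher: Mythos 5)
Your proposal is correct and follows essentially the same route as the paper: the inequality $v_2 \leq v_1$ comes from (HOPnr2) being a relaxation of (HOPnr1), and $v_1 \leq v_2$ comes from applying Lemma \ref{lem_sameopt} to an optimal solution of (HOPnr2) to produce a feasible point of (HOPnr1) with no larger cost. Your extra remark about non-attainment of the infimum is a care the paper does not take (it simply assumes an optimizer of (HOPnr2) exists), but it does not change the argument.
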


Noting that since (HOPnr1) is a relaxation of (HOP), if (HOPnr1) does not satisfy the conditions in Lemma \ref{lem_sameopt}, neither does the original problem (HOP). It means that the upper bounds of $T_{out}^{S_j},~ j=1,...,N^S-1$ are loose. In this case, the upper bounds can be tightened without reducing the feasible region of (HOP).
Once the conditions in Lemma \ref{lem_sameopt} are satisfied, due to Theorem \ref{thm_equivalence}, we achieve the global optimal solution of (HOPnr1) by solving (HOPnr2) locally. Such a solution leads to an upper bound of (HOP) according to Proposition \ref{prop_upperbound}. Meanwhile, since (HOPnr2) is a relaxation of (HOP), a lower bound of (HOP) can be obtained by the global optimal solution of (HOPnr2). Therefore solving (HOPnr2) of each subproblem in the branch-and-bound tree enables us to get lower and upper bounds simultaneously.

In fact, the conditions in Lemma \ref{lem_sameopt} can be achieved by a preprocessing procedure on (HOPnr1). For each $j=1,...,N^S-1$, if a feasible solution $\tilde{s}$ of (HOPnr1) is obtained such that
\begin{equation*}
	\{s \mid T_{out}^{S_j} > \tilde{T}_{out}^{S_j},~ s ~\text{is feasible to (HOPnr1)}~\} = \emptyset,
\end{equation*}
then the upper bound of variable $T_{out}^{S_j}$ can be tightened to $\tilde{T}_{out}^{S_j}$, that is
\begin{equation*}
\overline{T}_{out}^{S_j} \coloneqq \min \{\overline{T}_{out}^{S_j}, \tilde{T}_{out}^{S_j}\}.
\end{equation*}
The following example of two stations illustrates the basic idea to find such $\tilde{s}$.

\begin{example}\label{eg_preprocess}
Suppose there are two stations $A$ and $B$. The heads of the oil between $A$ and $B$ must be restricted in $[H_{lb},H_{ub}]$. The outlet temperature at station $A$, denoted as $T_A$, is required to be not higher than 60. However, the inlet temperature at station $B$ is not limited. Our target is to figure out a scheme $(H_A, T_A)$, where $H_A$ is the outlet head at station $A$, so as to satisfy the restriction on heads and maximize $T_A$. To achieve the goal, we can adopt the following procedure.
\begin{enumerate}[(a)]
	\item Initialize $(H_A, T_A) \coloneqq (H_{lb}, 60)$, calculate the heads at each point between stations $A$ and $B$;
	\item Find out the maximal violation point to $H_{lb}$, increase $H_A$ until the head at this point satisfies the lower bound constraint;
	\item Find out the maximal violation point to $H_{ub}$, decrease $T_A$ to some appropriate value, reset $H_A \coloneqq H_{lb}$ and update heads in the whole pipeline;
	\item Repeat steps (b) and (c) until there exists no violation point, return $(H_A, T_A)$.
\end{enumerate}

	\begin{figure}[!h]
		\FIGURE
		{\includegraphics[width=0.6\textwidth]{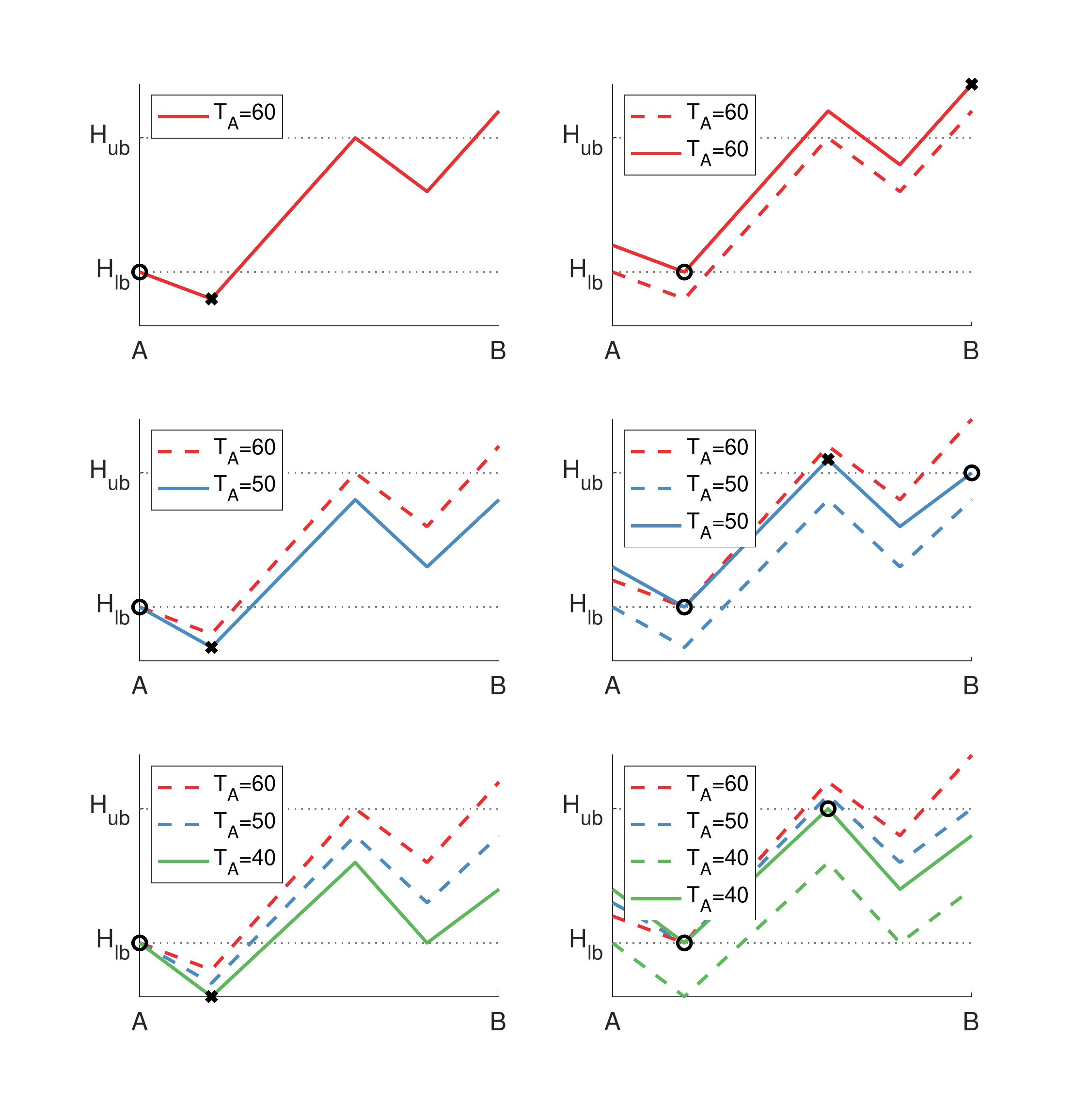}}
		{The Heads of Oil Flow Between the Stations $A$ and $B$\label{example_preprocess}}
		{}
	\end{figure}
	Fig. \ref{example_preprocess} shows the above procedure. As we can see, the final head curve we obtain is such that there are two points reaching the lower bound and upper bound of head, respectively. In fact, any schemes with $T_A > 40$ are infeasible. Either of these two points will violate the bound constraints of heads since the difference of them will be larger than $H_{ub}-H_{lb}$ in that case.
\end{example}

The above example only treats two stations. A detailed preprocessing procedure will be presented in the next section to deal with the case of three or more stations and the infeasibility detected during the tightening.

\section{The Branch-and-Bound Algorithm and Solution Techniques }\label{sec_Algorithms}

The analysis in the former sections enables us to design a branch-and-bound algorithm for solving the problem (HOP) (see subsection \ref{subsec_branch-and-bound}). A detailed preprocessing procedure is described in subsection \ref{subsec_preprocess}. An outer approximation
algorithm is proposed in subsection \ref{outer_approx} for solving (HOPnr2). A warm start strategy is provided in subsection \ref{subsec_warmstart}.

\subsection{The Branch-and-Bound Algorithm}\label{subsec_branch-and-bound}

In our algorithm, the branch-and-bound tree is generated by branching on integer variables $x_j$ and $y_j$, $j=1,...,N^S-1$. For each subproblem in the tree, we solve the corresponding (HOPnr2) to obtain a lower bound of the subproblem. Besides, the solution of (HOPnr2) also leads to a feasible solution of (HOPnr1) through Lemma \ref{lem_sameopt}, which can further be lifted to a feasible solution of the original problem (HOP) according to Proposition \ref{prop_upperbound}.
If the values of $x$ and $y$ are vectors consisting of integers, we have found the global optimal solution of the subproblem so that it can be pruned. Otherwise, we need to branch on one of the fractional variables $x_j$ or $y_j$ to divide the feasible region. After adding two new subproblems, we continue with an unprocessed node in the branch-and-bound tree.
The detailed algorithm is introduced by Algorithm \ref{alg_branch-and-bound}.

\begin{algorithm}[h!]

	\TableSpaced
	{\begin{algorithmic}[1]
		\STATE Initialize $\underline{z} \coloneqq 0$, $\overline{z} \coloneqq (N^{CP}, N^{SP})$, $LB \coloneqq -\infty$, $GLB \coloneqq -\infty,~ GUB \coloneqq +\infty,~ \mathcal{Q} \coloneqq \left\{(\underline{z}, \overline{z}, LB)\right\}$;
		\WHILE{$\mathcal{Q}\neq \emptyset$ and $GLB < GUB$}
			\STATE Choose $q\in\mathcal{Q}$, $\mathcal{Q} \coloneqq \mathcal{Q} \setminus q$, update $GLB,\underline{z},\overline{z},LB$ by $q$; \label{algstep_choosesub}
			\STATE Preprocess (HOPnr1($\underline{z}$, $\overline{z}$)) to check its feasibility and meet the conditions in Lemma \ref{lem_sameopt};
			\IF{$LB < GUB$ and (HOPnr1($\underline{z}$, $\overline{z}$)) is feasible}
				\STATE Solve (HOPnr2($\underline{z}$, $\overline{z}$)) and get the local minimizer $\check{s}  \coloneqq (\check{z},\Delta \check{H}^{SP}, \Delta \check{T}, \check{H}^S_{out}, \check{F})$, update $LB \coloneqq C(\check{x}, \Delta \check{H}^{SP}, \Delta \check{T})$, get the (HOPnr1($\underline{z}$, $\overline{z}$)) feasible solution $\hat{s}$ with $\check{s}$ based on Lemma \ref{lem_sameopt};\label{algstep_solvesub}
				\STATE Obtain $\tilde{s} \coloneqq (\tilde{z},\Delta \tilde{H}^{SP}, \Delta \tilde{T}, \tilde{H}^S_{out})$ with $\hat{s}$ based on Proposition \ref{prop_upperbound};
				\IF{$C(\tilde{x}, \Delta \tilde{H}^{SP}, \Delta \tilde{T}) < GUB$}
					\STATE $GUB \coloneqq C(\tilde{x}, \Delta \tilde{H}^{SP}, \Delta \tilde{T})$, $s^* \coloneqq \tilde{s}$
				\ENDIF
				\IF{there exists $j_0\in\{1,...,N^S-1\}$ such that $\check{x}_{j_0}\notin\mathbb{Z}$ ($\check{y}_{j_0}\notin\mathbb{Z}$)}
					\STATE Let
					\begin{align*}
						Lz &\coloneqq \{z\in\mathbb{Z}^{2N^S-2}_+ \mid x_j = \overline{x}_j,~ y_j = \overline{y}_j,~\text{for all}~j\neq j_0,~ x_{j_0} = \lfloor \check{x}_{j_0} \rfloor (y_{j_0} = \lfloor \check{y}_{j_0} \rfloor)\},\\
						Rz &\coloneqq \{z\in\mathbb{Z}^{2N^S-2}_+ \mid x_j = \underline{x}_j,~ y_j = \underline{y}_j,~\text{for all}~j\neq j_0,~ x_{j_0} = \lceil \check{x}_{j_0} \rceil (y_{j_0} = \lceil \check{y}_{j_0} \rceil)\},
					\end{align*}
					and update $\mathcal{Q}\coloneqq\mathcal{Q}\cup\{(\underline{z},Lz,LB),(Rz,\overline{z},LB)\}$;
				\ENDIF
			\ENDIF
			\STATE Update $GLB$ by
			\begin{equation*}
			GLB \coloneqq \min_{q\in\mathcal{Q}}\{LB \mid LB ~\text{is the lower bound of}~ q\}
			\end{equation*}
		\ENDWHILE
		\IF{$GUB = +\infty$}
			\RETURN the (HOP) problem is infeasible;
		\ELSE
			\RETURN current incumbent solution $s^*$ and $GUB$;
		\ENDIF
	\end{algorithmic}}
	\caption{The Branch-and-Bound Algorithm for Solving (HOP)}
	\label{alg_branch-and-bound}
\end{algorithm}

The following theorems show the finite termination and global property of Algorithm \ref{alg_branch-and-bound}.
\begin{theorem}\label{thm_finite}
	For any inputs of the problem (HOP), Algorithm \ref{alg_branch-and-bound} terminates finitely.
\end{theorem}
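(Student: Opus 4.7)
The plan is to show that the branch-and-bound tree generated by Algorithm~\ref{alg_branch-and-bound} contains only finitely many nodes and that each iteration of the main loop performs only finite work. Since every node is indexed by a pair of integer bound vectors $(\underline{z},\overline{z})$ and the tree is binary, it will suffice to exhibit a strictly decreasing integer-valued potential along any root-to-leaf path.

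To this end I would introduce
\[
\Phi(\underline{z},\overline{z}) \coloneqq \sum_{j=1}^{N^S-1}\bigl[(\overline{x}_j-\underline{x}_j)+(\overline{y}_j-\underline{y}_j)\bigr],
\]
a nonnegative integer whose value $\Phi_{\text{init}}$ at the root is finite (bounded by the total number of CSPs and SSPs in the pipeline). Writing $\check{z}_{j_0}$ for whichever of $\check{x}_{j_0},\check{y}_{j_0}$ the branching step selects, the hypothesis $\check{z}_{j_0}\notin\mathbb{Z}$ together with $\underline{z}_{j_0}\le\check{z}_{j_0}\le\overline{z}_{j_0}$ forces
\[
\lfloor\check{z}_{j_0}\rfloor\le\overline{z}_{j_0}-1,\qquad \lceil\check{z}_{j_0}\rceil\ge\underline{z}_{j_0}+1.
\]
Hence both children created in the branching step of Algorithm~\ref{alg_branch-and-bound} have $\Phi$ strictly smaller than the parent's (by at least one each). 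Consequently any root-to-leaf path has length at most $\Phi_{\text{init}}$, and the total number of nodes ever pushed onto $\mathcal{Q}$ is bounded by $2^{\Phi_{\text{init}}+1}-1$; the extra pruning by $LB\ge GUB$, by integrality of the returned $\check{z}$, and by infeasibility reported from the preprocessing step can only shrink this further.

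I would then briefly argue that each individual iteration itself costs only finite time: the preprocessing procedure of Section~\ref{subsec_preprocess} performs finitely many head and temperature adjustments before either certifying infeasibility or tightening $\overline{T}_{out}^{S_j}$, and the convex subproblem (HOPnr2($\underline{z},\overline{z}$)) is resolved by the outer-approximation scheme of Section~\ref{outer_approx}. Combined with the depth bound this yields termination of the while-loop in finitely many iterations.

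The main obstacle I anticipate is confirming that no step outside the branching rule can sabotage the potential argument. Specifically, I need to check that preprocessing only modifies continuous bounds (the $\overline{T}_{out}^{S_j}$) and never touches $(\underline{z},\overline{z})$, and that whenever a node is processed and $\check{z}\notin\mathbb{Z}^{2N^S-2}$ a fractional coordinate $j_0$ is guaranteed to exist so that branching does take place; both facts are immediate from the statement of Algorithm~\ref{alg_branch-and-bound} but must be flagged explicitly to close the argument.
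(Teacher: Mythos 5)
Your proof is correct and rests on the same core observation as the paper's: each branching step strictly shrinks the integer bound box $(\underline{z},\overline{z})$, and there are only finitely many admissible such boxes. The paper phrases this as an injection of the (pairwise distinct) nodes into the finite set $\{(\underline{z},\overline{z}) \mid 0 \leq \underline{x}_j \leq \overline{x}_j \leq N^{CP}_j,\ 0 \leq \underline{y}_j \leq \overline{y}_j \leq N^{SP}_j\}$, whereas you bound the tree depth via the decreasing potential $\Phi$ and then use binarity of the tree; both are valid, and your version has the minor merit of making explicit the checks (preprocessing never alters $(\underline{z},\overline{z})$; each iteration does finite work) that the paper leaves implicit.
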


\begin{theorem}\label{thm_global}
	Suppose the problem (HOP) is feasible. Then Algorithm \ref{alg_branch-and-bound} returns the global optimal solution of the problem (HOP)
 when it terminates.
\end{theorem}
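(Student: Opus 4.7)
The plan is to establish correctness via the three standard invariants of a branch-and-bound scheme: (a) the running incumbent $s^*$ is always feasible to (HOP), so $GUB$ upper-bounds the optimal value; (b) at every processed node the scalar $LB$ computed in step \ref{algstep_solvesub} lower-bounds the optimal value of the subproblem (HOP($\underline z,\overline z$)); and (c) every integer-feasible point of (HOP) lies in some node still in $\mathcal Q$ or already processed. Given (a)--(c), termination (Theorem \ref{thm_finite}) with either $\mathcal Q=\emptyset$ or $GLB\ge GUB$ will force $GUB$ to equal the optimal value of (HOP).

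For (a), I would observe that every incumbent update runs the chain $\check s\mapsto\hat s\mapsto\tilde s$: $\check s$ is a local (hence by convexity global) minimizer of (HOPnr2($\underline z,\overline z$)), Lemma \ref{lem_sameopt} promotes it to a feasible $\hat s$ of (HOPnr1($\underline z,\overline z$)) of no larger cost, and Proposition \ref{prop_upperbound} rounds $\hat z$ upward to a feasible $\tilde s$ of (HOP($\underline z,\overline z$)) and thus of (HOP). Since (HOP) is feasible by assumption, Proposition \ref{prop_upperbound} also yields feasibility of (HOPnr1) at the root, the preprocessing preserves this while installing the hypotheses of Lemma \ref{lem_sameopt}, and the lemma then makes (HOPnr2) feasible at the root. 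So $GUB$ becomes finite at the very first iteration and the ``infeasible'' output is never returned.

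For (b), (HOPnr2($\underline z,\overline z$)) relaxes (HOP($\underline z,\overline z$)) in two respects: the integer variables are relaxed to intervals and the nonlinear equalities \eqref{constr_nonlinear} are relaxed to \eqref{constr_newnonlinear}, while the objective is unchanged. Under Assumption \ref{assup_fconvex} the relaxation is convex, so $LB$ is a valid global lower bound. For (c), I would argue by induction on tree depth that the children produced at a branching step enforce $x_{j_0}\le\lfloor\check x_{j_0}\rfloor$ and $x_{j_0}\ge\lceil\check x_{j_0}\rceil$ (or the same disjunction for $y_{j_0}$) while leaving the other coordinates in their parent ranges; as $\check x_{j_0}\notin\mathbb Z$, no integer lies in the open interval $(\lfloor\check x_{j_0}\rfloor,\lceil\check x_{j_0}\rceil)$, so the two children partition the integer points of the parent. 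Combining (a)--(c), at termination any integer-feasible $z^*$ of (HOP) lies either in a pending node of $\mathcal Q$, where its cost is $\ge LB\ge GLB\ge GUB$, or in a processed node pruned because (HOPnr1) was infeasible (impossible for $z^*$), because $LB\ge GUB$ (so cost $\ge GUB$), or because $\check z$ was already integer and $GUB$ was updated to a value $\le LB$ (again cost $\ge GUB$). Hence the optimal value of (HOP) is at least $GUB$, and since $s^*$ is feasible it is also at most $GUB$; so $s^*$ is globally optimal.

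The main obstacle I anticipate is the bookkeeping in invariant (c), particularly checking that the precise $Lz,Rz$ formulae in Algorithm \ref{alg_branch-and-bound} really encode the disjunction on the fractional coordinate while keeping the other bounds intact, and that the update rule $GLB\coloneqq\min_{q\in\mathcal Q}\{LB\}$ combined with the pruning rules maintains the invariant that every still-unresolved integer solution of (HOP) has cost $\ge GLB$. Once this is formalised, every other step follows directly from Proposition \ref{prop_upperbound}, Lemma \ref{lem_sameopt}, and Theorem \ref{thm_equivalence}.
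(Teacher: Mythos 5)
Your proposal is correct and rests on exactly the same ingredients as the paper's proof: (HOPnr2) is a convex relaxation supplying valid lower bounds, Lemma \ref{lem_sameopt} and Proposition \ref{prop_upperbound} supply feasible incumbents whose cost matches the node bound when $\check z$ is integral, and the $Lz,Rz$ branching partitions the integer points of the parent node. The only difference is presentational — you argue directly via the three branch-and-bound invariants (and treat the $GLB\ge GUB$ termination case for pending nodes a bit more explicitly), whereas the paper argues by contradiction on the node containing the putative better optimum — so this is essentially the same proof.
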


\subsection{A Preprocessing Algorithm}\label{subsec_preprocess}

To complete the step in line 4 of Algorithm \ref{alg_branch-and-bound}, we need to design a preprocessing procedure for strengthening the bound constraint (\ref{constr_boundTout}) and checking the feasibility of (HOPnr1($\underline{z}$, $\overline{z}$)). In this subsection, we present this preprocessing procedure. Note that based on the constraints (\ref{constr_inoutT}) and (\ref{constr_temprisebound}), the feasible upper bound of the outlet temperature in each station will not be affected by the inlet temperature which is decided by the former stations. A similar situation can be found in the feasible lower bound of the outlet head in each station. So the idea is to individually deal with each station from the one before the last one to the first one. For each of them, like the procedure shown in Example \ref{eg_preprocess}, we manage to obtain a running scheme of pumps and furnaces which can satisfy all the constraints about the station and the pipe right behind it. Such a scheme will reach the outlet head as low as possible and the outlet temperature as high as possible. Since we need to guarantee the uniqueness of the solution of the nonlinear equation, the following assumption is essential.
\begin{assumption}\label{assump_strict}
	Given $Q_{jr} > 0$ and $D_{jr} > 0$, the function $f$ is convex and
	strictly monotonically decreasing about $T_{ave}^{P_{jr}} > 0$.
\end{assumption}
To illustrate the algorithm conveniently, we denote
\begin{equation*}
\underline{H}_{out}^{P_{j0}} \coloneqq \underline{H}_{out}^{S_j},~ \ \ \overline{H}_{out}^{P_{j0}} \coloneqq \overline{H}_{out}^{S_j},~ \ \ \underline{H}_{out}^{P_{jN^P_j}} \coloneqq \underline{H}_{in}^{S_{j+1}},~ \ \ \overline{H}_{out}^{P_{jN^P_j}} \coloneqq \overline{H}_{in}^{S_{j+1}},~ \ \ j=1,...,N^S-1.
\end{equation*}
The preprocessing procedure of (HOPnr1) is introduced in Algorithm \ref{alg_preprocess} in detail.

\begin{algorithm}[!h]
	\TableSpaced
	{\begin{algorithmic}[1]
		\FOR{$j=N^S-1,N^S-2,...,1$}
			\STATE Execute domain propagation on (\ref{constr_outTpart}), (\ref{constr_connectT}), (\ref{constr_connectH}), (\ref{constr_boundTout}) with $j$ fixed and (\ref{constr_boundHin}), (\ref{constr_boundTin}) with $j+1$ fixed;\label{algstep_domainpropa}
			\STATE Let $T_{out}^{S_j} \coloneqq \overline{T}_{out}^{S_j}$, calculate $T_{out}^{P_{jr}},T_{ave}^{P_{jr}},F_{jr},r=1,...,N^P_j$ and $T_{in}^{S_{j+1}}$ by (\ref{constr_nonlinear}),(\ref{constr_outTpart}),(\ref{constr_aveTpart}) and (\ref{constr_connectT});\label{algstep_fixT}
			
			\STATE Let $H_{out}^{S_j} \coloneqq \underline{H}_{out}^{S_j}$, calculate $H_{out}^{P_{jr}},r=0,...,N^P_j$ by (\ref{constr_hchangepart}) and (\ref{constr_connectH});\label{algstep_fixH}
			\STATE Let $r_0 \coloneqq \argmax_{r=0,...,N^P_j}\{\underline{H}_{out}^{P_{jr}}-H_{out}^{P_{jr}}\}$, $\delta H \coloneqq \underline{H}_{out}^{P_{jr_0}}-H_{out}^{P_{jr_0}}$, update $H_{out}^{P_{jr}} \coloneqq H_{out}^{P_{jr}} + \delta H$;\label{algstep_liftH}
			\IF{there exists $r_1\in\{0,...,N^P_j\}$ such that $H_{out}^{P_{jr_1}} > \overline{H}_{out}^{P_{jr_1}}$}\label{algstep_checkHub}
				\IF{$r_1 < r_0$}
					\RETURN (HOPnr1($\underline{z}$, $\overline{z}$)) is infeasible;\label{algstep_infeasible1}
				\ENDIF
					\STATE Solve the following nonlinear equation
					\begin{equation}\label{nonlineareqn}
						\begin{aligned}
						\overline{H}_{out}^{P_{jr_1}} &= \underline{H}_{out}^{P_{jr_0}} - \sum_{t=1}^{r_1-r_0}\left[f(\phi_{jt}^{r_0} u + \psi_{jt}^{r_0}, Q_{j,r_0+t}, D_{j,r_0+t})L_{j,r_0+t} + \Delta Z_{j,r_0+t}\right],
						\end{aligned}
					\end{equation}
					where $u\in\mathbb{R}$ is a variable, $\phi_{jr}^{r_0}$ and $\psi_{jr}^{r_0}$ are parameters.
					
					\IF{the nonlinear equation (\ref{nonlineareqn}) is infeasible}
						\RETURN (HOPnr1($\underline{z}$, $\overline{z}$)) is infeasible;\label{algstep_infeasible2}
					\ENDIF
						\STATE Let $$\overline{T}_{out}^{S_j} \coloneqq \sum_{t=1}^{r_0}\left(T_g^{P_{jt}}+T_f^{P_{jt}}\right)\left[1-\exp(\alpha_{jt}L_{jt})\right]\prod_{k=1}^{t-1}\exp(\alpha_{jk}L_{jk}) + \prod_{t=1}^{r_0}\exp(\alpha_{jt}L_{jt})u.$$ Here $u$ is the solution of \eqref{nonlineareqn}, go to line \ref{algstep_fixT};\label{algstep_tightT}
			\ENDIF
				\STATE Strengthen $\underline{H}_{out}^{S_j}$, $\overline{T}_{out}^{S_j}$ and $\underline{H}_{in}^{S_j}$ via
				\begin{align*}
					\underline{H}_{out}^{S_j} \coloneqq  H_{out}^{S_j},~
					  \underline{H}_{in}^{S_j} \coloneqq \max\{\underline{H}_{in}^{S_j}, \underline{H}_{out}^{S_j}-\overline{x}_jH^{CP}_j-\overline{y}_j\overline{H}^{SP}_j\},~
					  \overline{T}_{in}^{S_j} \coloneqq \min\{\overline{T}_{in}^{S_j},\overline{T}_{out}^{S_j}\};
				\end{align*}\label{algstep_tightH}
				\IF{bound constraints (\ref{constr_boundHin})-(\ref{constr_transitionHead}) are unreasonable}\label{algstep_unreasonable}\label{algstep_checkreasonable}
					\RETURN (HOPnr1($\underline{z}$, $\overline{z}$)) is infeasible;\label{algstep_infeasible3}
				\ENDIF
		\ENDFOR
		\RETURN (HOPnr1($\underline{z}$, $\overline{z}$)) is feasible;
	\end{algorithmic}}
	\caption{The Preprocessing Algorithm for (HOPnr1($\underline{z}$, $\overline{z}$))}
	\label{alg_preprocess}
\end{algorithm}

Domain propagation in step \ref{algstep_domainpropa} is the process that shrinks the bounds of variables $H_{out}^{S_j}$, $H_{in}^{S_{j+1}}$, $T_{out}^{S_j}$ and $T_{in}^{S_{j+1}}$ without affecting the feasible region. In the nonlinear equation \eqref{nonlineareqn}, for all $r=1,...,r_1-r_0$, the parameters $\phi_{jr}^{r_0}$ and $\psi_{jr}^{r_0}$ are defined as
\begin{align*}
\phi_{jr}^{r_0} = &~\frac{1}{3}\left[1+2\exp\left(-\alpha_{j,r_0+r}L_{j,r_0+r}\right)\right]\exp\left(-\sum_{t=1}^{r-1}\alpha_{j,r_0+t}L_{j,r_0+t}\right), \\
\psi_{jr}^{r_0} = &~\frac{1}{3}\left[1+2\exp\left(-\alpha_{j,r_0+r}L_{j,r_0+r}\right)\right]\sum_{t=1}^{r-1}\left(T_g^{P_{j,r_0+t}}+T_f^{P_{j,r_0+t}}\right)\left[1-\exp\left(-\alpha_{j,r_0+t}L_{j,r_0+t}\right)\right] + \\ &~\frac{2}{3}\left(T_g^{P_{j,r_0+r}}+T_f^{P_{j,r_0+r}}\right)\left[1-\exp\left(-\alpha_{j,r_0+r}L_{j,r_0+r}\right)\right].
\end{align*}
``Unreasonable'' in step \ref{algstep_unreasonable} means that the lower bound of some variable is strictly larger than its upper bound. We give the following lemmas to illustrate the meaning of the nonlinear equation \eqref{nonlineareqn} and the infeasibility certificate in step \ref{algstep_infeasible1}.

\begin{lemma}\label{lem_u0upper}
	Suppose Assumption \ref{assup_fconvex} holds and the nonlinear equation \eqref{nonlineareqn} has a feasible solution $\tilde{u}$. Then $\tilde{u}$ is an upper bound of variable $T_{out}^{P_{jr_0}}$. Moreover, the variable $T_{out}^{S_j}$ has an upper bound
	\begin{equation*}
		\sum_{t=1}^{r_0}\left(T_g^{P_{jt}}+T_f^{P_{jt}}\right)\left[1-\exp(\alpha_{jt}L_{jt})\right]\prod_{k=1}^{t-1}\exp(\alpha_{jk}L_{jk}) + \prod_{t=1}^{r_0}\exp(\alpha_{jt}L_{jt})\tilde{u}.
	\end{equation*}
\end{lemma}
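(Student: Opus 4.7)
The plan is to reinterpret the right-hand side of \eqref{nonlineareqn} as a single-variable function of $u$ whose monotonicity is inherited from Assumption \ref{assup_fconvex} (strictly, under Assumption \ref{assump_strict} which is in force throughout this subsection), and to argue that any feasible $T_{out}^{P_{jr_0}}$ necessarily lies on the ``increasing'' side of $\tilde{u}$ by comparing head differences.

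First I would iterate the axial temperature drop recurrence \eqref{constr_outTpart} starting from $T_{out}^{P_{jr_0}} = u$ to obtain an explicit affine expression for $T_{out}^{P_{j,r_0+t}}$ in $u$, and then apply \eqref{constr_aveTpart} to collect $T_{ave}^{P_{j,r_0+t}}$ in the form $\phi_{jt}^{r_0}\, u + \psi_{jt}^{r_0}$ with exactly the coefficients written in the algorithm. A direct check shows that $\phi_{jt}^{r_0} > 0$, since it is a positive combination of products of positive exponentials.

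Next I would set
\begin{equation*}
g(u) \coloneqq \sum_{t=1}^{r_1-r_0}\bigl[f\bigl(\phi_{jt}^{r_0} u + \psi_{jt}^{r_0},\, Q_{j,r_0+t},\, D_{j,r_0+t}\bigr) L_{j,r_0+t} + \Delta Z_{j,r_0+t}\bigr],
\end{equation*}
so that $g$ is convex and strictly monotonically decreasing as a sum of affine compositions of $f$ with positive leading coefficients in each argument. Equation \eqref{nonlineareqn} then reads $g(\tilde{u}) = \underline{H}_{out}^{P_{jr_0}} - \overline{H}_{out}^{P_{jr_1}}$. For any feasible scheme, telescoping \eqref{constr_hchangepart} together with \eqref{constr_nonlinear} and the affine representation of $T_{ave}$ yields
\begin{equation*}
H_{out}^{P_{jr_0}} - H_{out}^{P_{jr_1}} = g\bigl(T_{out}^{P_{jr_0}}\bigr),
\end{equation*}
while feasibility demands $H_{out}^{P_{jr_0}} \geq \underline{H}_{out}^{P_{jr_0}}$ and $H_{out}^{P_{jr_1}} \leq \overline{H}_{out}^{P_{jr_1}}$. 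Combining these gives $g(T_{out}^{P_{jr_0}}) \geq g(\tilde{u})$, and strict monotone decrease of $g$ then forces $T_{out}^{P_{jr_0}} \leq \tilde{u}$, which is the first assertion.

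For the moreover part, I would invert \eqref{constr_outTpart} to obtain $T_{out}^{P_{j,r-1}} = (T_g^{P_{jr}}+T_f^{P_{jr}})\bigl(1 - e^{\alpha_{jr}L_{jr}}\bigr) + e^{\alpha_{jr}L_{jr}}\, T_{out}^{P_{jr}}$, then iterate backward from $r_0$ down to $0$ to express $T_{out}^{S_j} = T_{out}^{P_{j0}}$ as an affine, strictly increasing function of $T_{out}^{P_{jr_0}}$ whose multiplier $\prod_{t=1}^{r_0} e^{\alpha_{jt}L_{jt}} > 0$ and whose intercept equals $\sum_{t=1}^{r_0}(T_g^{P_{jt}}+T_f^{P_{jt}})[1-e^{\alpha_{jt}L_{jt}}]\prod_{k=1}^{t-1} e^{\alpha_{jk}L_{jk}}$. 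Substituting the bound $T_{out}^{P_{jr_0}} \leq \tilde{u}$ into this affine map yields the stated closed-form upper bound on $T_{out}^{S_j}$. The only real obstacle is the algebraic bookkeeping needed to match the coefficients $\phi_{jt}^{r_0}$, $\psi_{jt}^{r_0}$ in the forward iteration and the intercept formula in the backward iteration; once that routine computation is in hand, convexity and strict monotonicity of $f$ do the rest in one line.
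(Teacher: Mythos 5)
Your proof is correct and follows essentially the same route as the paper's: both recast the head drop from segment $r_0$ to $r_1$ as a monotone single-variable function of $T_{out}^{P_{jr_0}}$ (the paper's $h$ is just $-g$), compare its value at the feasible temperature with its value at $\tilde{u}$ using the bounds $H_{out}^{P_{jr_0}} \geq \underline{H}_{out}^{P_{jr_0}}$ and $H_{out}^{P_{jr_1}} \leq \overline{H}_{out}^{P_{jr_1}}$, and then transport the resulting bound to $T_{out}^{S_j}$ via the affine inversion of \eqref{constr_outTpart}. If anything you are slightly more explicit than the paper about the backward iteration and about needing strict monotonicity (Assumption \ref{assump_strict}) to pass from $g(T_{out}^{P_{jr_0}}) \geq g(\tilde{u})$ to $T_{out}^{P_{jr_0}} \leq \tilde{u}$, a point the paper's own proof glosses over.
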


\begin{lemma}\label{lem_nleqfeasible}
	Suppose Assumption \ref{assump_strict} holds and (HOPnr1($\underline{z}$, $\overline{z}$)) is feasible. For all $0 \leq r_0 < r_1 \leq N^P_j$, if there exists $\hat{u}$ such that
	\begin{equation*}
	\begin{aligned}
	\overline{H}_{out}^{P_{jr_1}} &< \underline{H}_{out}^{P_{jr_0}} - \sum_{t=1}^{r_1-r_0}\left[f(\phi_{jt}^{r_0} \hat{u}+\psi_{jt}^{r_0}, Q_{j,r_0+t}, D_{j,r_0+t})L_{j,r_0+t} + \Delta Z_{j,r_0+t}\right], \\
	\end{aligned}
	\end{equation*}
	then the nonlinear equation \eqref{nonlineareqn} has a unique solution. Furthermore, if $\tilde{u}$ is feasible to \eqref{nonlineareqn}, then the following property holds
	\begin{equation*}
		\overline{T}_{out}^{S_j} > \sum_{t=1}^{r_0}\left(T_g^{P_{jt}}+T_f^{P_{jt}}\right)\left[1-\exp(\alpha_{jt}L_{jt})\right]\prod_{k=1}^{t-1}\exp(\alpha_{jk}L_{jk}) + \prod_{t=1}^{r_0}\exp(\alpha_{jt}L_{jt})\tilde{u}.
	\end{equation*}
\end{lemma}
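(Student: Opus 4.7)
The plan is to introduce the function
\[ g(u) \coloneqq \underline{H}_{out}^{P_{jr_0}} - \sum_{t=1}^{r_1-r_0}\left[f(\phi_{jt}^{r_0} u + \psi_{jt}^{r_0}, Q_{j,r_0+t}, D_{j,r_0+t})L_{j,r_0+t} + \Delta Z_{j,r_0+t}\right], \]
so that \eqref{nonlineareqn} reads $g(u) = \overline{H}_{out}^{P_{jr_1}}$. First I would check directly from its definition that $\phi_{jt}^{r_0} > 0$, since it is a product of $\tfrac{1}{3}[1+2\exp(-\alpha L)]$ with an exponential factor. By Assumption \ref{assump_strict}, $f$ is strictly decreasing and convex in its first argument, hence continuous on the interior of its domain. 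Consequently each summand of $g$ is continuous and strictly decreasing in $u$, so $g$ is continuous and strictly increasing.

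For existence, the hypothesis directly gives $g(\hat{u}) > \overline{H}_{out}^{P_{jr_1}}$; the nontrivial task is to produce a point at which $g$ lies below $\overline{H}_{out}^{P_{jr_1}}$. Here I would invoke the feasibility of (HOPnr1($\underline{z},\overline{z}$)). Pick any feasible scheme $\bar{s}$ and set $u_0 \coloneqq \bar{T}_{out}^{P_{jr_0}}$. A straightforward induction on \eqref{constr_outTpart} starting from the index $r_0$, combined with the averaging rule \eqref{constr_aveTpart}, yields $\bar{T}_{ave}^{P_{j,r_0+t}} = \phi_{jt}^{r_0}u_0 + \psi_{jt}^{r_0}$ for $t = 1,\dots,r_1-r_0$, which is precisely how the parameters $\phi_{jt}^{r_0}, \psi_{jt}^{r_0}$ were defined. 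Telescoping \eqref{constr_hchangepart} from index $r_0$ to $r_1$ and substituting \eqref{constr_nonlinear} give $\bar{H}_{out}^{P_{jr_1}} = \bar{H}_{out}^{P_{jr_0}} - \sum_{t=1}^{r_1-r_0}[f(\phi_{jt}^{r_0}u_0 + \psi_{jt}^{r_0},\cdot,\cdot)L_{j,r_0+t} + \Delta Z_{j,r_0+t}]$, and the bound constraints \eqref{constr_transitionHead}, \eqref{constr_boundHin} and \eqref{constr_boundHout} force $\bar{H}_{out}^{P_{jr_0}} \geq \underline{H}_{out}^{P_{jr_0}}$ and $\bar{H}_{out}^{P_{jr_1}} \leq \overline{H}_{out}^{P_{jr_1}}$. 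Together these inequalities deliver $g(u_0) \leq \bar{H}_{out}^{P_{jr_1}} \leq \overline{H}_{out}^{P_{jr_1}}$. The intermediate value theorem applied to $g$ on $[u_0,\hat{u}]$ then yields a solution $\tilde{u}$ of \eqref{nonlineareqn}, and strict monotonicity of $g$ gives uniqueness while also placing $\tilde{u} < \hat{u}$ strictly.

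For the furthermore claim, I would note that back-propagating \eqref{constr_outTpart} from $T_{out}^{P_{jr_0}} = u$ all the way to $T_{out}^{P_{j0}} = T_{out}^{S_j}$ expresses $T_{out}^{S_j}$ as an affine function $A_j + B_j u$ with
\[ A_j = \sum_{t=1}^{r_0}\bigl(T_g^{P_{jt}}+T_f^{P_{jt}}\bigr)\bigl[1-\exp(\alpha_{jt}L_{jt})\bigr]\prod_{k=1}^{t-1}\exp(\alpha_{jk}L_{jk}), \qquad B_j = \prod_{t=1}^{r_0}\exp(\alpha_{jt}L_{jt}) > 0, \]
so that $A_j + B_j \tilde{u}$ is exactly the RHS of the claimed inequality. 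In the calling context (steps \ref{algstep_fixT}--\ref{algstep_checkHub} of Algorithm \ref{alg_preprocess}), the value $\hat{u}$ is obtained by forward propagation from $T_{out}^{S_j} = \overline{T}_{out}^{S_j}$, so $A_j + B_j\hat{u} = \overline{T}_{out}^{S_j}$. Combining the strict inequality $\tilde{u} < \hat{u}$ with $B_j > 0$ then gives $A_j + B_j\tilde{u} < A_j + B_j\hat{u} = \overline{T}_{out}^{S_j}$, which is the desired strict upper bound.

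The main obstacle is the existence half: the hypothesis only supplies a point where $g$ overshoots, so without invoking the feasibility of (HOPnr1) one cannot hope to bracket a root. The crucial maneuver is to turn a feasible HOP scheme into an evaluation point $u_0$ of $g$ and to chain the head-loss identity with the correct directions of the bound constraints on $\bar{H}_{out}^{P_{jr_0}}$ and $\bar{H}_{out}^{P_{jr_1}}$; any sign error in that telescoping argument collapses the whole proof.
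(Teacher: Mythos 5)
Your proof is correct and follows essentially the same route as the paper's: both define the (strictly increasing, continuous) head-drop function of $u$, bracket the root between the value $T_{out}^{P_{jr_0}}$ of a feasible (HOPnr1) solution and $\hat{u}$, and conclude existence, uniqueness and $\tilde{u}<\hat{u}$ by the intermediate value theorem and strict monotonicity. Your treatment of the final inequality is, if anything, slightly more explicit than the paper's about the fact that $\hat{u}$ must be the forward-propagation of $\overline{T}_{out}^{S_j}$ for the conclusion to make sense.
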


\begin{lemma}\label{lem_r0>r1}
	Suppose Assumption \ref{assup_fconvex} holds. For all $0 \leq r_1 < r_0 \leq N^P_j$, if there exists some $\hat{u}$ such that
	\begin{equation*}
	\begin{aligned}
	\underline{H}_{out}^{P_{jr_0}} &> \overline{H}_{out}^{P_{jr_1}} - \sum_{t=1}^{r_0-r_1}\left[f(\phi_{jt}^{r_1} \hat{u}+\psi_{jt}^{r_1}, Q_{j,r_1+t}, D_{j,r_1+t})L_{j,r_1+t} + \Delta Z_{j,r_1+t} \right], \\
	\overline{T}_{out}^{S_j} &= \sum_{t=1}^{r_1}\left(T_g^{P_{jt}}+T_f^{P_{jt}}\right)\left[1-\exp(\alpha_{jt}L_{jt})\right]\prod_{k=1}^{t-1}\exp(\alpha_{jk}L_{jk}) + \prod_{t=1}^{r_1}\exp(\alpha_{jt}L_{jt})\hat{u},
	\end{aligned}
	\end{equation*}
	then (HOPnr1($\underline{z}$, $\overline{z}$)) is infeasible.
\end{lemma}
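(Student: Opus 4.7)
The plan is a proof by contradiction: assume (HOPnr1($\underline{z}$, $\overline{z}$)) admits a feasible scheme $\check{s}$, and derive a violation of the lower bound constraint at pipe segment $r_0$. First I would interpret the two hypotheses. Mirroring the interpretation given in Lemma \ref{lem_u0upper}, the second equation says precisely that $\hat{u}$ is the value of $T_{out}^{P_{jr_1}}$ that results from iterating the temperature recursion \eqref{constr_outTpart} from $r=1$ up to $r=r_1$ with starting value $T_{out}^{S_j} = \overline{T}_{out}^{S_j}$. Since $\check{T}_{out}^{S_j} \leq \overline{T}_{out}^{S_j}$ by \eqref{constr_boundTout} and the recursion is affine and increasing in the previous value, an easy induction (equivalent to applying Lemma \ref{lem_monoT} after extending a dummy scheme with $\overline{T}_{out}^{S_j}$) gives $\check{T}_{out}^{P_{jr_1}} \leq \hat{u}$ and, more generally, $\check{T}_{out}^{P_{j,r_1+t}} \leq \tilde{T}^{(r_1+t)}$ for $t = 0,1,\dots,r_0-r_1$, where $\tilde{T}^{(r)}$ denotes the temperature sequence generated by \eqref{constr_outTpart} from the initial value $\hat{u}$ at index $r_1$.

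Next I would verify the algebraic identity $\phi_{jt}^{r_1}\hat{u} + \psi_{jt}^{r_1} = \tilde{T}_{ave}^{P_{j,r_1+t}}$ for $t=1,\dots,r_0-r_1$. This is straightforward bookkeeping: unroll \eqref{constr_outTpart} for $t$ steps starting from $\tilde{T}^{(r_1)} = \hat{u}$ to express $\tilde{T}^{(r_1+t-1)}$ and $\tilde{T}^{(r_1+t)}$ as affine functions of $\hat{u}$, then apply the averaging rule \eqref{constr_aveTpart}; matching coefficients with the definitions of $\phi_{jt}^{r_1}$ and $\psi_{jt}^{r_1}$ yields the identity. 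Combined with the preceding monotonicity and \eqref{constr_aveTpart}, this gives
\begin{equation*}
\check{T}_{ave}^{P_{j,r_1+t}} \;\leq\; \phi_{jt}^{r_1}\hat{u} + \psi_{jt}^{r_1}, \qquad t = 1,\dots, r_0-r_1.
\end{equation*}

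Finally, invoking Assumption \ref{assup_fconvex} (monotonic decrease of $f$ in the temperature argument) and the nonlinear head-loss relation \eqref{constr_nonlinear}, I obtain $\check{F}_{j,r_1+t} \geq f(\phi_{jt}^{r_1}\hat{u}+\psi_{jt}^{r_1},Q_{j,r_1+t},D_{j,r_1+t})L_{j,r_1+t}$ for each $t$. Summing the head recursion \eqref{constr_hchangepart} from $r_1$ to $r_0$ and using $\check{H}_{out}^{P_{jr_1}} \leq \overline{H}_{out}^{P_{jr_1}}$, I get
\begin{equation*}
\check{H}_{out}^{P_{jr_0}} \;\leq\; \overline{H}_{out}^{P_{jr_1}} - \sum_{t=1}^{r_0-r_1}\!\left[f\!\left(\phi_{jt}^{r_1}\hat{u}+\psi_{jt}^{r_1}, Q_{j,r_1+t}, D_{j,r_1+t}\right)L_{j,r_1+t} + \Delta Z_{j,r_1+t}\right] \;<\; \underline{H}_{out}^{P_{jr_0}},
\end{equation*}
where the strict inequality is the first hypothesis. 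This contradicts $\check{H}_{out}^{P_{jr_0}} \geq \underline{H}_{out}^{P_{jr_0}}$ from \eqref{constr_transitionHead} (or \eqref{constr_boundHout}/\eqref{constr_boundHin} at the endpoints), proving infeasibility. The only nontrivial step is the identity $\phi_{jt}^{r_1}\hat{u}+\psi_{jt}^{r_1} = \tilde{T}_{ave}^{P_{j,r_1+t}}$, and even that reduces to mechanical unrolling of a linear recursion; the overall argument hinges on combining the temperature order-preservation of Lemma \ref{lem_monoT} with the monotonicity of $f$ in Assumption \ref{assup_fconvex}.
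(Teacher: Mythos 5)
Your proof is correct and is essentially the paper's argument: both proceed by contradiction from a hypothetical feasible scheme, use the second hypothesis to conclude that $\hat{u}$ upper-bounds $T_{out}^{P_{jr_1}}$, and then combine the monotonic decrease of $f$ with the summed head recursion from $r_1$ to $r_0$. The only cosmetic difference is where the contradiction lands — the paper packages the friction sum as a monotone function $h'$ and concludes $\hat{u} < \tilde{T}_{out}^{P_{jr_1}}$ (violating the temperature bound), whereas you conclude $\check{H}_{out}^{P_{jr_0}} < \underline{H}_{out}^{P_{jr_0}}$ (violating the head bound); these are contrapositive readings of the same inequality chain.
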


With all the above lemmas, we conclude that Algorithm \ref{alg_preprocess} either gives the infeasibility of (HOPnr1) or tightens the bounds of variables to meet the conditions in Lemma \ref{lem_sameopt}.

\begin{theorem}\label{thm_preprocess}
	Algorithm \ref{alg_preprocess} does not affect the feasible region of the problem (HOPnr1($\underline{z}$, $\overline{z}$)). When Algorithm \ref{alg_preprocess} terminates, either there exists a solution $\tilde{s}$ such that
	\begin{equation*}
		\tilde{T}_{out}^{S_j} = \overline{T}_{out}^{S_j},~ \ \ j=1,...,N^S-1,
	\end{equation*}
	or it returns that (HOPnr1($\underline{z}$, $\overline{z}$)) is infeasible.
\end{theorem}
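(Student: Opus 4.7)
My plan is to establish the theorem in two parts: (A) the bound‐tightening operations performed by Algorithm \ref{alg_preprocess} do not remove any point from the feasible region of (HOPnr1($\underline{z}$, $\overline{z}$)); and (B) whenever the algorithm exits without declaring infeasibility, the terminal bound $\overline{T}_{out}^{S_j}$ is attained by a concrete feasible scheme $\tilde{s}$ constructed from the final values produced in lines \ref{algstep_fixT}--\ref{algstep_liftH}.

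For part (A) I would walk through each tightening step in order. Domain propagation in line \ref{algstep_domainpropa} is, by definition, a bound operation that preserves every feasible point of the underlying equality system. The infeasibility declaration in line \ref{algstep_infeasible1} is validated by Lemma \ref{lem_r0>r1}: when the head profile constructed from the minimum outlet head violates the upper bound at an index $r_1<r_0$ while the temperature in the pipe has already been driven to its highest achievable profile, Lemma \ref{lem_r0>r1} rules out any feasible scheme. The declaration in line \ref{algstep_infeasible2} is the contrapositive of Lemma \ref{lem_nleqfeasible}: once a $\hat{u}$ certifying strict infeasibility of \eqref{nonlineareqn} is available (which is precisely the violation detected in line \ref{algstep_checkHub}), the nonlinear equation \eqref{nonlineareqn} must have a solution unless (HOPnr1) is itself infeasible. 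The temperature tightening in line \ref{algstep_tightT} is justified directly by Lemma \ref{lem_u0upper}, which certifies that the quantity computed from the root $u$ of \eqref{nonlineareqn} is a valid upper bound on $T_{out}^{S_j}$ for every feasible solution. The update $\underline{H}_{out}^{S_j}:=H_{out}^{S_j}$ in line \ref{algstep_tightH} is valid because $H_{out}^{S_j}=\underline{H}_{out}^{P_{jr_0}}$ is the smallest outlet head compatible with the downstream lower bound at $r_0$; combined with Lemma \ref{lem_monoT} and the monotonicity of $f$ in Assumption \ref{assup_fconvex}, a smaller outlet head would force a lower head at $r_0$ under any admissible temperature. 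The capacity-based tightening of $\underline{H}_{in}^{S_j}$ follows directly from the head-balance inequality \eqref{constr_inoutH} together with the bounds \eqref{constr_ybound} and the SSP range, and $\overline{T}_{in}^{S_j}:=\min\{\overline{T}_{in}^{S_j},\overline{T}_{out}^{S_j}\}$ is valid because $\Delta T_j\geq 0$ by \eqref{constr_temprisebound}. Finally, the declaration in line \ref{algstep_infeasible3} is sound: if a tightened lower bound strictly exceeds its upper bound, no feasible point survives.

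For part (B) I would argue by reverse induction on $j$. Suppose that after the iterations for $j+1,\ldots,N^S-1$ there is a feasible completion for those stations attaining each current upper bound on $T_{out}^{S_{j'}}$. In iteration $j$, once the body of the \textbf{if} in line \ref{algstep_checkHub} is not triggered, the variables $T_{out}^{S_j}=\overline{T}_{out}^{S_j}$ (from line \ref{algstep_fixT}) together with $H_{out}^{S_j}=\underline{H}_{out}^{S_j}+\delta H$ (from lines \ref{algstep_fixH}--\ref{algstep_liftH}) and the computed $T_{out}^{P_{jr}},T_{ave}^{P_{jr}},F_{jr},H_{out}^{P_{jr}}$ satisfy, by construction, all equality constraints \eqref{constr_hchangepart}--\eqref{constr_aveTpart}, the head lower bounds at every $r$, and (since no $r_1$ violated) the head upper bounds throughout the segment. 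The inlet head and temperature of station $j+1$ that result from this profile lie within the bounds just refreshed for the next iteration, so the downstream scheme promised by the induction hypothesis concatenates consistently. Choosing $\Delta T_j:=\overline{T}_{out}^{S_j}-T_{in}^{S_j}\geq 0$ and any $(x_j,y_j,\Delta H^{SP}_j)$ realizing the required head rise (possible because line \ref{algstep_checkreasonable} did not trigger, so $\underline{H}_{out}^{S_j}-\overline{H}_{in}^{S_j}\leq \overline{x}_jH^{CP}_j+\overline{y}_j\overline{H}^{SP}_j$) yields $\tilde{s}$ feasible for (HOPnr1($\underline{z}$, $\overline{z}$)) with $\tilde{T}_{out}^{S_j}=\overline{T}_{out}^{S_j}$. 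The base case $j=N^S-1$ is identical since no further stations need to be glued on.

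The main obstacle I expect is the bookkeeping around the re-entry from line \ref{algstep_tightT} back to line \ref{algstep_fixT}. I need to show that this inner loop terminates finitely and that each pass through it strictly decreases $\overline{T}_{out}^{S_j}$. Strict monotonicity follows from Assumption \ref{assump_strict}: a strictly higher temperature would force strictly smaller friction on the segment $[r_0,r_1]$, and the equality form of \eqref{nonlineareqn} then pins $\tilde{u}$ strictly below the previous upper bound. Because the index pair $(r_0,r_1)$ lives in a finite set and each loop iteration either terminates via lines \ref{algstep_infeasible1}--\ref{algstep_infeasible2} or strictly lowers $\overline{T}_{out}^{S_j}$ while leaving the equality system continuous in $T_{out}^{S_j}$, a compactness/monotone-convergence argument gives finite termination. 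Combining (A), (B), and this finite-termination argument delivers the stated dichotomy.
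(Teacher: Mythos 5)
Your proposal is correct and follows essentially the same route as the paper's proof: part (A) validates each tightening and each infeasibility declaration via Lemmas \ref{lem_u0upper}, \ref{lem_nleqfeasible} and \ref{lem_r0>r1} exactly as the paper does (including the contradiction argument for the $\underline{H}_{out}^{S_j}$ update based on the monotonicity of $f$), and part (B) builds the witness $\tilde{s}$ station by station, matching the paper's explicit choice $\tilde{z}=\overline{z}$, $\tilde{H}_{out}^{S_j}=\underline{H}_{out}^{S_j}$, $\tilde{T}_{out}^{S_j}=\overline{T}_{out}^{S_j}$ with $\Delta T_j$ fixed by the equality constraints. Your extra discussion of finite termination of the inner loop is not required by the statement (which is conditional on termination) and is omitted by the paper as well, so it is harmless.
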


\subsection{An Outer Approximation Algorithm}\label{outer_approx}

Noting that (HOPnr2($\underline{z}$, $\overline{z}$)) is a convex programming problem, the outer approximation method \citep{Kelley1960The} can be introduced to solve it in step \ref{algstep_solvesub} of Algorithm \ref{alg_branch-and-bound}. This method solves (HOPnr2($\underline{z}$, $\overline{z}$)) implicitly by approximating the nonlinear constraints with linear ones, which are defined as:
\begin{equation}\label{constr_linearapprox}
F_{jr} \geq \left[\zeta_{j,r,\omega} T_{ave}^{P_{jr}} + f(\omega, Q_{jr}, D_{jr}) -\zeta_{j,r,\omega}\right]L_{jr},~ \ \ \zeta_{j,r,\omega} \in \partial f(\omega, Q_{jr}, D_{jr}),~ \ \ (j,r,\omega)\in\Omega,
\end{equation}
where $\omega$ is a parameter and $\partial f$ is the set of subgradients of the function $f$. Since these linear constraints are all valid for (\ref{constr_newnonlinear}), the approximation will not affect any feasible solutions. 
The linear approximation problem can be defined as the following linear programming (LP) problem
\begin{equation*}
	\text{(HOPlr($\underline{z}$, $\overline{z}$, $\Omega$))}~~ \ \
	\begin{aligned}
	\min_{\Psi} \quad & C(x, \Delta H^{SP}, \Delta T) \\
	\text{s.t.} ~~~  & (2), (4)-(10), (13)-(21), (23).
	\end{aligned}
\end{equation*}
\begin{remark}\label{remark_s3}
	Each choice of $s \coloneqq (z, \Delta H^{SP}, \Delta T, H_{out}^{S}, F)$ defines a unique scheme $\Psi$ of (HOPlr) satisfying all equality constraints. Specifically, $s$ defines a unique feasible scheme $\Psi$ of (HOPlr) if $\Psi$ satisfies all inequality constraints.
\end{remark}

The basic idea of our outer approximation algorithm is as follows. At first, we solve (HOPlr($\underline{z}$, $\overline{z}$, $\Omega$)) with some initial index set $\Omega_0$ and obtain its optimal solution. If this solution satisfies the constraints (\ref{constr_newnonlinear}), it is also feasible to (HOPnr2($\underline{z}$, $\overline{z}$)). Otherwise, we add a linear outer approximation constraint to (HOPlr($\underline{z}$, $\overline{z}$, $\Omega$)) at the maximal violation point so that the updated problem will not violate (\ref{constr_newnonlinear}) with the same solution. Then we update the index set $\Omega$ iteratively until the optimal solution of (HOPlr($\underline{z}$, $\overline{z}$, $\Omega$)) slightly violates the constraints of (HOPnr2($\underline{z}$, $\overline{z}$)) under some tolerance. Due that (HOPlr($\underline{z}$, $\overline{z},\Omega$)) is a relaxation of (HOPnr2($\underline{z}$, $\overline{z}$)), such a solution is approximately optimal for the latter problem. More details are presented in Algorithm \ref{alg_outerapprox}.

\begin{algorithm}
	\TableSpaced
	{\begin{algorithmic}[1]
		\STATE Initialize $\Omega \coloneqq \Omega_0$, violation tolerance $\epsilon > 0$;
		\STATE Solve (HOPlr($\underline{z}$, $\overline{z}$, $\Omega$)) and get the optimal solution $\check{s}$, calculate $\hat{F}$ by $\check{T}_{ave}^{P_{jr}}$ and \eqref{constr_nonlinear};\label{algstep_solvelinearsub}
		\STATE Find the maximal violation $vio_{max}$ and the corresponding index $j_0$, $r_0$ with
		\begin{align*}
			(j_0,r_0) &\coloneqq \mathop{\arg\max}_{j=1,...,N^S-1 \atop r=1,...,N^P_j}\left\{\hat{F}_{jr} - \check{F}_{jr}\right\}, \\
			vio_{max} &\coloneqq \hat{F}_{j_0r_0} - \check{F}_{j_0r_0};
		\end{align*}
		\IF{$vio_{max} > \epsilon$}
			\STATE Update $\Omega \coloneqq \Omega \cup \{(j_0,r_0,\check{T}_{ave}^{P_{j_0r_0}})\}$, go to line \ref{algstep_solvelinearsub};
		\ELSE
			\RETURN $\check{s}$ is the approximate optimal solution of (HOPnr2($\underline{z}$, $\overline{z}$))
		\ENDIF
	\end{algorithmic}}
	\caption{The Outer Approximation Algorithm for (HOPnr2($\underline{z}$, $\overline{z}$))}
	\label{alg_outerapprox}
\end{algorithm}

For Algorithm \ref{alg_outerapprox}, the initial index set $\Omega_0$ can be set to the empty set. Furthermore, when embedding Algorithm \ref{alg_outerapprox} into the branch-and-bound algorithm, Algorithm \ref{alg_branch-and-bound}, we may develop some warm start strategy in
choosing $\Omega_0$. See the next subsection for more details.

\subsection{A Warm Start Strategy}\label{subsec_warmstart}

To further speed up Algorithm \ref{alg_branch-and-bound}, we consider a warm start strategy for Algorithm \ref{alg_outerapprox}. For each subproblem (HOP($\underline{z}$, $\overline{z}$)), we call Algorithm \ref{alg_outerapprox} in step \ref{algstep_solvesub} of Algorithm \ref{alg_branch-and-bound} to solve the nonlinear relaxation (HOPnr2($\underline{z}$, $\overline{z}$)). Note that the constraints \eqref{constr_linearapprox} do not contain the branching variables $x$ and $y$. When Algorithm \ref{alg_outerapprox} stops, the constraints \eqref{constr_linearapprox} defined by the final value of $\Omega$ are actually valid not only for the (HOPnr2($\underline{z}$, $\overline{z}$)), but also for the (HOPnr2) relaxed from any other subproblems. This indicates that the initial value $\Omega_0$ can inherit the final value of $\Omega$ in the last call of Algorithm \ref{alg_outerapprox}. Such inheritances can be viewed as a warm start strategy of the outer approximation solving procedure of (HOPnr2($\underline{z}$, $\overline{z}$)).

On one hand, more constraints in (HOPlr($\underline{z}$, $\overline{z}$, $\Omega$)) lead to higher computational cost at each iteration. On the other hand, the warm start strategy provides sufficiently good initial sets $\Omega_0$ for each call (except for the first call) of Algorithm \ref{alg_outerapprox}, which may probably reduce the iteration of this algorithm. Our numerical results show that the performance of
Algorithm \ref{alg_branch-and-bound} with the warm start strategy is much better.

\section{Numerical Results}\label{sec_Numerical_results}

In this section, we illustrate the practical validity of Algorithm \ref{alg_branch-and-bound} through numerical tests with a real world HOP problem. All tests in this section were done on a MacBook Pro laptop with Core i7 CPU and 16 GB RAM. Codes were run on the MATLAB R2018a platform.


Our test object is the Q-T heated oil pipeline located in China. It is 548.54 kilometers long in total. There are nine stations in the whole pipeline. The elevation map is given in Fig. \ref{fig_qt_mileele}. The grey solid line shows the original elevation map of the Q-T pipeline. Stations are marked through red stars. For the convenience of computation, the blue line in Fig. \ref{fig_qt_mileele} is implemented as the elevation data in our tests. Further, we divide the pipeline into 131 segments. Each of them is no longer than five kilometers. The split points of the pipeline are marked with blue circles.

\begin{figure}[!h]
	\FIGURE
	{\includegraphics[width=1.0\textwidth]{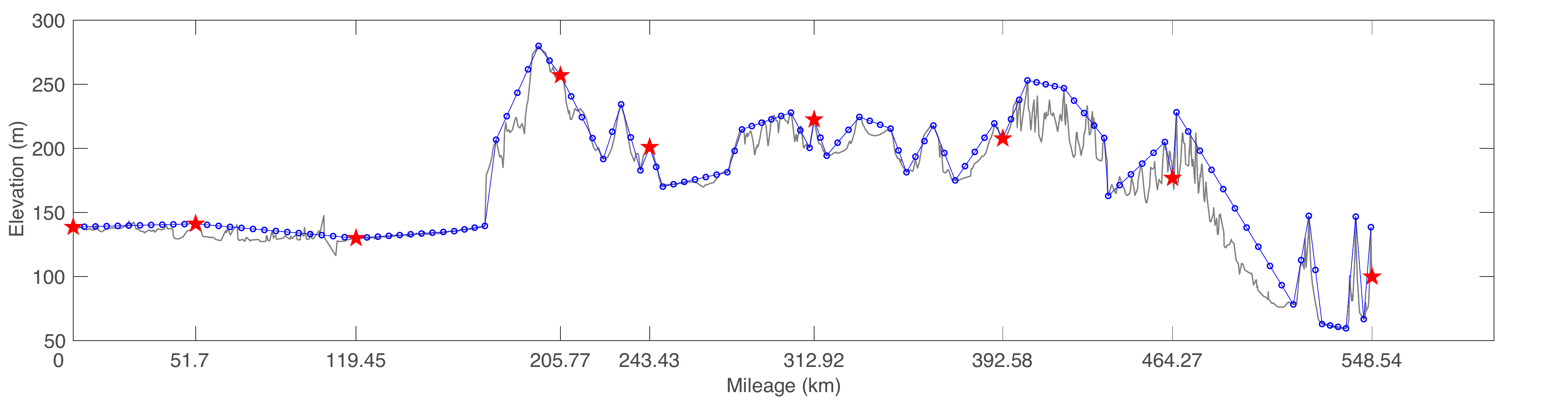}}
	{Elevation Map of the Q-T Pipeline. \label{fig_qt_mileele}}
	{}
\end{figure}

For the oil transported in the Q-T pipeline, its density and specific heat are 859 kg/m$^3$ and 2,400 J/(kg$\cdot^\circ$C), respectively. It has a relatively high freezing point, which is 32$^\circ$C, and high viscosity. Data of the dynamic viscosity of the oil are shown in Table \ref{table_viscosity}.

\begin{table}[!h]
	\TABLE
	{Dynamic Viscosity of the Oil Transported in Q-T Pipeline. \label{table_viscosity}}
	  {\begin{tabular}{rr}
	  \toprule
	  Temperature ($^\circ$C) & Dynamic viscosity (mPa$\cdot$s) \\
	  \midrule
	  35    & 107.4 \\
	  36    & 82.6 \\
	  37    & 64.1 \\
	  38    & 55.7 \\
	  39    & 48.3 \\
	  40    & 41.9 \\
	  42    & 27.6 \\
	  46    & 21.9 \\
	  50    & 19.2 \\
	  54    & 16.9 \\
	  60    & 13.7 \\
	  68    & 10.7 \\
	  \bottomrule
	  \end{tabular}}%
	  {}
  \end{table}%

By curve fitting on the above viscosity data, the kinematic viscosity $\nu$ is obtained
\begin{equation*}
	\nu(T) = \left[8.166\times 10^6\exp(-0.3302T)+77.04\exp(-0.02882T)\right]/1000/\rho.
\end{equation*}
Fig. \ref{fig_vistemp} shows the curve fitting result. It is easy to verify that $\nu$ is smooth when $T > 0$ and satisfies Assumption \ref{assump_strict}.

\begin{figure}[!h]
	\FIGURE
	{\includegraphics[width=0.6\textwidth]{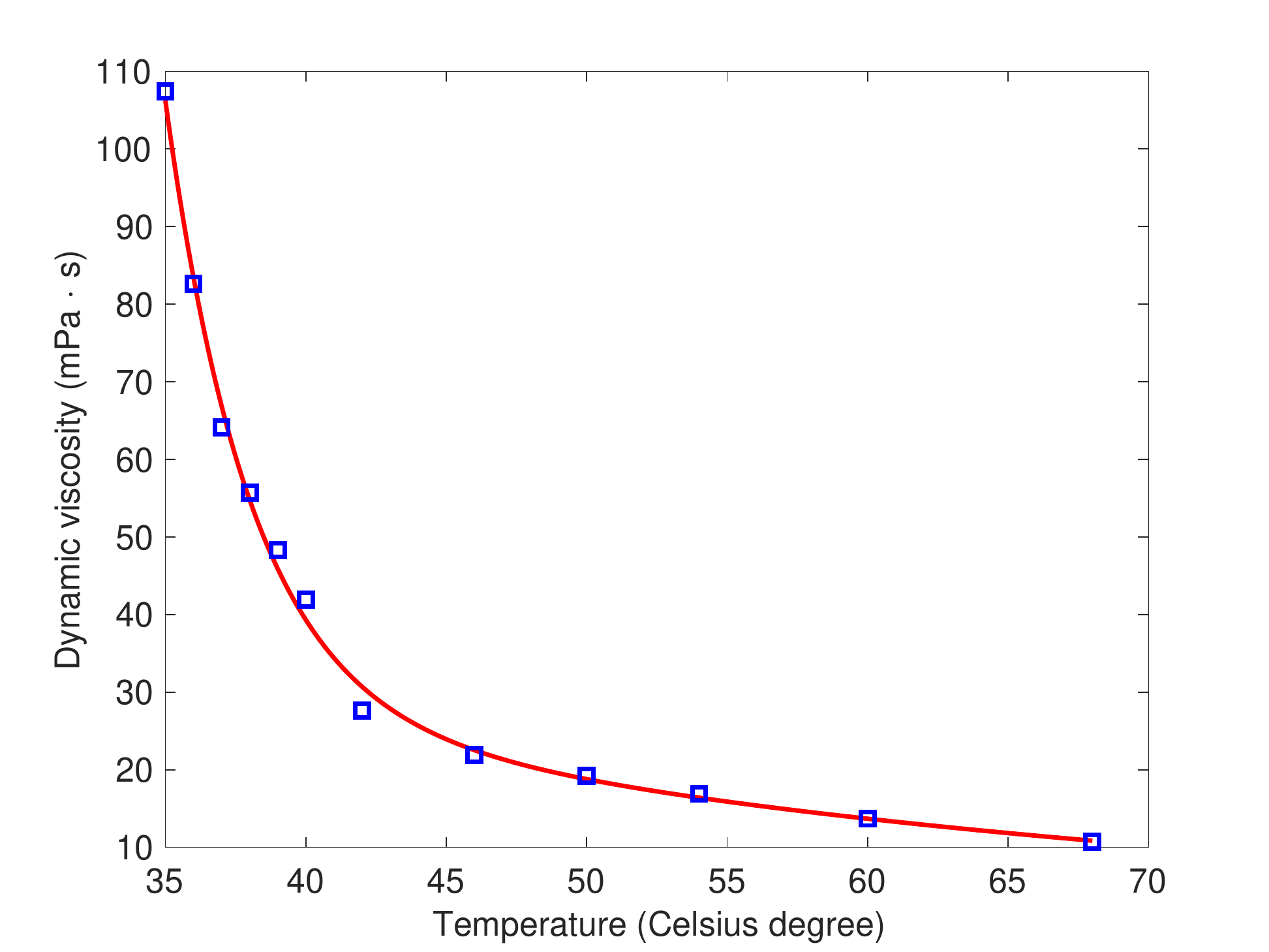}}
	{Viscosity-Temperature Curve of the Oil. \label{fig_vistemp}}
	{}
\end{figure}

For the station and pipe configurations, the inlet head and temperature of the first station are 57.0 meters and 40.7$^\circ$C, respectively. For each station $j$, the bounds of inlet and outlet values are set as follows.
\begin{equation*}
	\underline{H}_{in}^{S_j} = \underline{H}_{out}^{S_j} = 35.6,~ \ \ \overline{H}_{in}^{S_j} = \overline{H}_{out}^{S_j} = 748.4,~ \ \ \underline{T}_{in}^{S_j} = \underline{T}_{out}^{S_j} = 35,~ \ \ \overline{T}_{in}^{S_j} = \overline{T}_{out}^{S_j} = 48.
\end{equation*}
The bounds of $H_{out}^{P_{jr}}$, $r=1,\dots,N^P_j-1,$ are given as the same as $H_{out}^{S_j}$. Other parameters are presented in Table \ref{table_deployments}.
\begin{table}[!h]
	\TABLE
	{Station and Pump Deployments in Q-T Pipeline. \label{table_deployments}}
		{\begin{tabular}{c|r|r|rrr|rrrr}
			\toprule
		Station & $Q_{j0}$ (m$^3$/h) & $D_{j0}$ (mm) & $H^{CP}_j$ (m) & $N^{CP}_j$ & $\xi_{j}^{CP}$ & $\underline{H}^{SP}_j$ (m) & $\overline{H}^{SP}_j$ (m) & $N^{SP}_j$ & $\xi_j^{SP}$ \\
		\midrule
    1     & 2,212  & 740 & 235.76  & 4     & 78.7\% & -& -     & 0     & - \\
    2     & 2,810  & 740 & -     & 0     & -     & - & -     & 0     & - \\
    3     & 2,215  & 772 & 245.70  & 3     & 78.0\% & 94.01 & 244.24 & 1     & 80.3\% \\
    4     & 2,941  & 622 & 222.36  & 4     & 83.5\% & - & -     & 0     & - \\
    5     & 2,751  & 685 & 229.96  & 3     & 83.3\% & 102.84 & 231.35 & 1     & 83.8\% \\
    6     & 2,022  & 695 & 239.56  & 3     & 75.7\% & 95.34 & 239.56 & 1     & 79.5\% \\
    7     & 2,102  & 715 & 237.96  & 3     & 77.0\% & 92.94 & 237.96 & 1     & 79.6\% \\
		8     & 2,047  & 705 & -     & 0     & -     & - & -     & 0     & - \\
		\bottomrule
		\end{tabular}}%
		{}

  %
\end{table}%
Note that the inner diameters given in Table \ref{table_deployments} are the equivalent pipe diameters. With the data of volume flow and inner diameter of the pipe, function $f$ in (HOP) is obtained by formula \eqref{darcy}.

With the above data, we set up the corresponding (HOP) problem and manage to get the global optimal solution through the algorithms proposed in section \ref{sec_Algorithms}. The operation scheme (optimal scheme) based on this solution is compared with a practical scheme. Table \ref{table_schemes} and Fig. \ref{fig_schemes} show the comparison results. In Table \ref{table_schemes}, $C_{power}$ and $C_{fuel}$ are the total costs of pumps and furnaces in each station, respectively. Since we have no head data between each station for the practical scheme, the first pressure curve in Fig. \ref{fig_schemes} only consists of the inlet and outlet values of each station.  We can see that there are considerable differences between the two schemes. The total costs of the whole Q-T pipeline are 542,751.95 yuan per day for the practical scheme, and 505,676.76 yuan per day for the scheme coming from the optimal solution of (HOP). This indicates that the optimal scheme gains a 6.83\% improvement, which proves that the proposed model and the corresponding algorithms can bring significant economic benefits.

\begin{table}[!h]
\TABLE
  {Practical and Optimal Operation Scheme Comparison (Cost Unit: yuan/d). \label{table_schemes}}
		{\begin{tabular}{c|rrrrr|rrrrr}
			\toprule
		\multicolumn{1}{c|}{\multirow{3}[-2]{*}{Station}} & \multicolumn{5}{c|}{Practical scheme} & \multicolumn{5}{c}{Optimal scheme} \\
		\cmidrule{2-11}
					& \multicolumn{1}{c}{$x$} & \multicolumn{1}{c}{$\Delta H^{SP}$} & \multicolumn{1}{c}{$C_{power}$} & \multicolumn{1}{c}{$\Delta T$} & \multicolumn{1}{c|}{$C_{fuel}$} & \multicolumn{1}{c}{$x$} & \multicolumn{1}{c}{$\Delta H^{SP}$} & \multicolumn{1}{c}{$C_{power}$} & \multicolumn{1}{c}{$\Delta T$} & \multicolumn{1}{c}{$C_{fuel}$} \\
					\midrule
    1     & 3     & 0     & 72,510.05 & 3.00  & \Black{14,047.23}  & 3     & 0     & 72,510.05  & 7.30  & 34,181.59 \\
    2     & 0     & 0     & 0     & 3.50  & 20,818.94  & 0     & 0     & 0.00  & 1.24  & \Black{7,391.16} \\
    3     & 1     & 219.96 & \Black{47,578.26} & 2.90  & \Black{13,597.40}  & 1     & 244.24 & 50,020.79  & 10.13  & 47,496.64 \\
    4     & 3     & 0     & 85,751.64 & 2.80  & \Black{17,431.60}  & 1     & 0     & \Black{28,583.88}  & 3.83  & 23,825.27 \\
    5     & 2     & 104.33 & 67,879.53 & 4.10  & 23,875.84  & 1     & 231.35 & \Black{55,390.45}  & 0.00  & \Black{0} \\
    6     & 1     & 170.27 & \Black{39,140.92} & 6.48  & \Black{27,735.78}  & 2     & 224.68 & 67,530.40  & 6.84  & 29,293.14 \\
    7     & 1     & 203.94 & 43,330.03 & 8.80  & 39,156.12  & 0     & 175.05 & \Black{16,858.35}  & 7.46  & \Black{33,178.22} \\
		8     & 0     & 0     & 0     & 6.90  & \Black{29,898.62}  & 0     & 0     & 0.00  & 9.10  & 39,416.83 \\
		\bottomrule
		\end{tabular}}%
		{}
  %
\end{table}%

\begin{figure}
	\FIGURE
	{\includegraphics[width=1\textwidth]{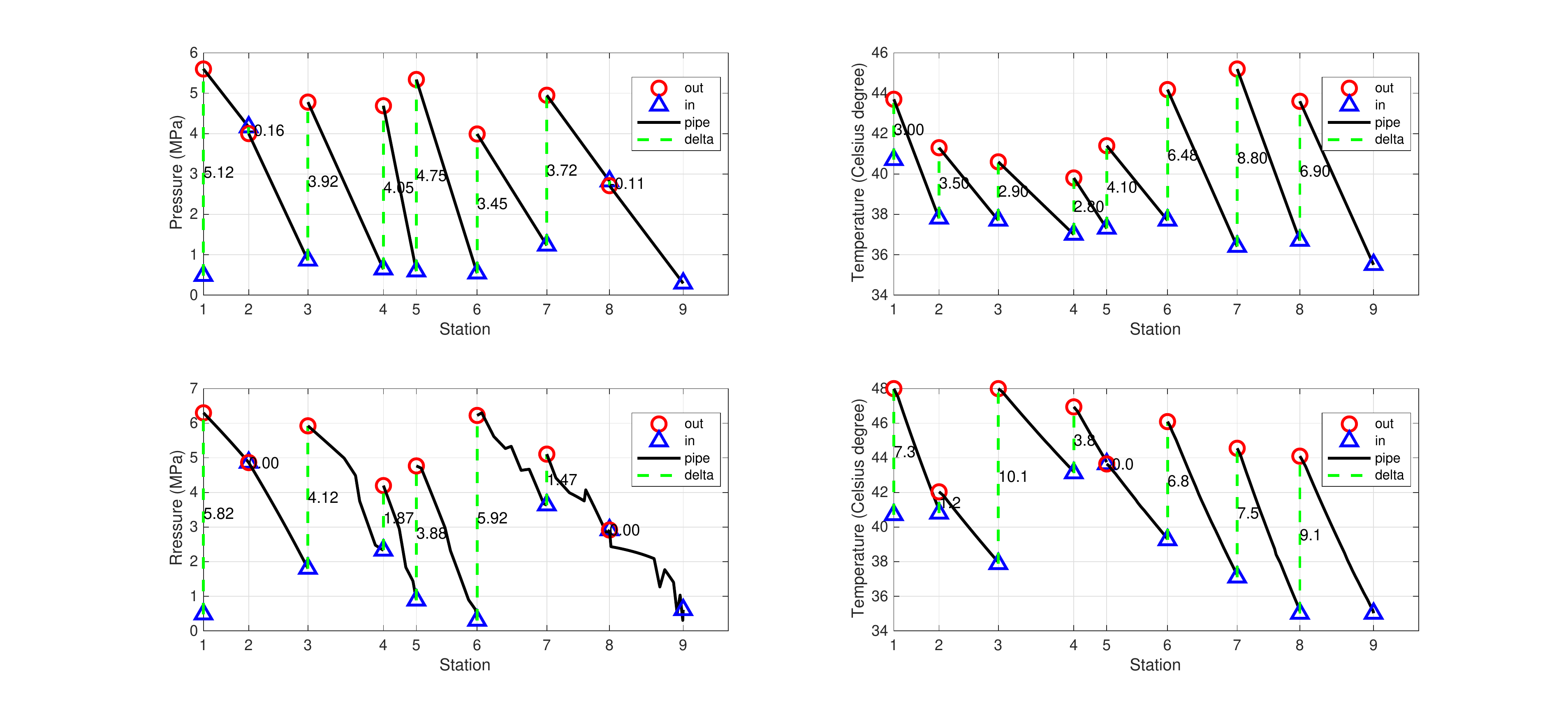}}
	{Pressure and Temperature Curves of the Practical Scheme (Up) and the Optimal Scheme (Down). \label{fig_schemes}}
	{}
\end{figure}

Except for the comparison of solution qualities, different solution procedures were also tested for (HOPnr2). We solved the problem (HOP) of Q-T pipeline with Algorithm \ref{alg_branch-and-bound} using three different routines for (HOPnr2). These methods are listed in Table \ref{table_methods}. The IPM based method solves (HOPnr2) directly, while the outer approximation based methods solve LP at each inner iteration. Table \ref{table_hopnr2} shows the comparison results among them. From Table \ref{table_hopnr2}, we can conclude that although the outer approximation based methods lose the quality of optimal solution, the optimal objective values of the three routines are almost the same. Moreover, \verb|oap+ws| is much faster than the other two methods, which makes the algorithms proposed in this paper become more practical for HOP problems. Notice that although \verb|oap+ws| is much faster than \verb|nlp|,  \verb|oap| is less efficient than \verb|nlp|. This shows the advantage of implementing outer approximation in successive NLP algorithms.

\begin{table}[!h]
	\TABLE
	{Three Routines for Solving (HOPnr2). \label{table_methods}}
	{\begin{tabular}{lll}
		\toprule
		Method & Algorithm & Solver \\
		\midrule
		{\ttfamily nlp} & IPM & {\ttfamily IPOPT} \\
		{\ttfamily oap} & Algorithm \ref{alg_outerapprox} with $\Omega_0 = \emptyset$ & {\ttfamily LINPROG} \\
		{\ttfamily oap+ws} & Algorithm \ref{alg_outerapprox} with $\Omega_0$ inheriting from former subproblems & {\ttfamily LINPROG} \\
		\bottomrule
	\end{tabular}}
	{}
\end{table}

\begin{table}[!h]
	\TABLE
	{Objective Values and Time by Different Routines for (HOPnr2). \label{table_hopnr2}}
	{\begin{tabular}{rr|rr|rr}
		\toprule
		\multicolumn{2}{c|}{{\ttfamily nlp}} & \multicolumn{2}{c|}{{\ttfamily oap}} & \multicolumn{2}{c}{{\ttfamily oap+ws}} \\
		\midrule
		$C$ (yuan/d) & Time (s) & $C$ (yuan/d) & Time (s) & $C$ (yuan/d) & Time (s) \\
		505,676.76 & 40.8 & 505,676.12 & 81.5 & 505,676.33 & \Black{8.6} \\
		\bottomrule
	\end{tabular}}
	{}
\end{table}

\section{Some Concluding Remarks}\label{sec_Conclusions}

In this paper, we proposed an MINLP model for HOP problems and designed an efficient algorithm based on the branch-and-bound framework. As long as the problem (HOP) is feasible, the proposed algorithm is theoretically guaranteed for obtaining the global optimal solution. Furthermore, the algorithm can be accelerated by some outer approximation and warm start approaches. The numerical results with a real world HOP problem showed that the proposed model and algorithm can achieve a more economic scheme compared with the used practical scheme. The high efficiency of the outer approximation based method also guarantees the practicability.

Nevertheless, there may exist more general cases against Assumption \ref{assumption_fund}. If we regard that the friction heat of oil is variable with the friction, then $T_f^{P_{jr}}$ in the constraints \eqref{constr_outTpart} has to be defined as
\begin{equation*}
	T_f^{P_{jr}} = \frac{gF_{jr}\rho Q_{jr}}{K_{jr}\pi d_{jr}},~ \ \ j=1,...,N^S-1,~ \ \ r=1,...,N^{P}_j.
\end{equation*}
Such an extension may bring the infeasibility of both the heads $H_{out}^{P_{jr}}$ and the temperatures $T_{out}^{P_{jr}}$ in some pipe segments after the constraints \eqref{constr_nonlinear} are relaxed to inequalities. Therefore, the equivalence between the extended (HOPnr1) and (HOPnr2) can hardly be guaranteed. We feel that more investigation is required for this general case.
The model for HOP problems can also be generalized to for example the situation where the function $f$ in the constraints \eqref{constr_nonlinear} is replaced with a nonlinear function without Assumption \ref{assup_fconvex} or \ref{assump_strict}, or the situation where there exist nonlinear pumps and furnaces efficiency formulation in the objective function of (HOP). In future, we may consider the extensions of the theories and algorithms of this paper to more general models.



\section*{Acknowledgments}
Many thanks are due to Jianjun Liu from CNPC Key Laboratory of Oil \& Gas Storage and Transportation, PetroChina Pipeline R \& D Center,  who provided great assistances on the background of the HOP problem and testing instance. This research was supported by the Chinese Natural Science Foundation (Nos. 11631013 and 11701137) and the National 973 Program of China (No. 2015CB856002).

\nocite{*}

\bibliographystyle{informs2014}
\bibliography{Global_alg_for_HOP}

\begin{APPENDIX}{Proofs of Propositions, Lemmas and Theorems}
\proof{Proof of Proposition \ref{prop_np}}
We show that (HOP($\underline{z}$, $\overline{z}$)) can be reduced to the cutting stock problem \citep{gilmore1961linear} \citep{gilmore1963linear}, which is $\mathcal{NP}$-hard. Under the following parameter settings:
\begin{itemize}
	\item $\underline{z} = (0,0)$,\ $\overline{z} = (N^{CP}, 0)$,\ $N^{CP}_j \in \mathbb{Z}_+$,\ $H^{CP}_j \in \mathbb{Z}_+$,\ $j=1,...,N^S-1$;
	\item $L_{jr} = 0$,\ $j=1,...,N^S-1$,\ $r=1,...,N^P_j$;
	\item $\Delta Z_{jr} = 0$, $j=1,...,N^S-1$, $r=1,...,N^P_j$;
	\item $\underline{H}_{in}^{S_1} = \overline{H}_{in}^{S_1}$, $\underline{H}_{in}^{S_{N^S}} = \overline{H}_{in}^{S_{N^S}}$, $\underline{H}_{in}^{S_1}-\underline{H}_{in}^{S_{N^S}} \in \left(0,\sum_{j=1}^{N^S-1}N^{CP}_jH^{CP}_j\right) \cap \mathbb{Z}_+$;
	\item $\underline{H}_{in}^{S_j} = 0$, $ \overline{H}_{in}^{S_j} = +\infty$, $j=2,...,N^S-1$, $\underline{H}_{out}^{S_j} = 0$, $ \overline{H}_{out}^{S_j} = +\infty$, $j=1,...,N^S-1$;
	\item $\underline{H}_{out}^{P_{jr}} = 0$, $\underline{H}_{out}^{P_{jr}} = +\infty$, $j=1,...,N^S-1$, $r=1,...,N^P_j$;
	\item $\underline{T}_{in}^{S_j} = \overline{T}_{in}^{S_j}$, $j=1,...,N^S$, $\underline{T}_{out}^{S_j} = \overline{T}_{out}^{S_j}$, $j=1,...,N^S-1$,
\end{itemize}
the original (HOP($\underline{z}$, $\overline{z}$)) reduces to
\begin{equation}\label{reduceHOP}
\begin{aligned}
\min_{x} \quad & \sum_{j=1}^{N^S-1}
C_p\rho Q_{j0} g\frac{H^{CP}_j}{\xi^{CP}_{j}} x_j \\
\text{s.t.} ~~~  & \sum_{j=1}^{N^S-1} x_jH^{CP}_j \geq \underline{H}_{in}^{S_1}-\underline{H}_{in}^{S_{N^S}}, \\
& x_j \in \{0,...,N^{CP}_j\},~ \ \ j=1,...,N^S-1.
\end{aligned}
\end{equation}
Then let $V \coloneqq \underline{H}_{in}^{S_1}-\underline{H}_{in}^{S_{N^S}}$ be the number of rolls of length $l$ demanded, $x_j$ is the number of times the $j$-th cutting pattern is used, $U_j \coloneqq H^{CP}_j$ is the number of rolls of length $l$ produced each time the $j$-th way of cutting up a roll is used, $W_j \coloneqq C_p\rho Q_{j0} g H^{CP}_j/\xi_{j}$ is the cost of the parent roll from which the $j$-th cutting pattern is cut. Then according to \cite{gilmore1963linear}, the problem (\ref{reduceHOP}) can be viewed as a cutting stock problem, which is $\mathcal{NP}$-hard. This completes the proof.
\Halmos
\endproof

	\proof{Proof of Proposition \ref{prop_upperbound}}
	(HOPnr1) is a relaxation of (HOP). Therefore the ``only if'' direction is obvious. We only need to prove that $\hat{s}$ defines a feasible solution of (HOP($\underline{x}$, $\overline{x}$)).
	Suppose $\check{s}$ defines
	\begin{equation*}
		\check{\Psi} \coloneqq \left(\check{z}, \Delta \check{H}^{SP}, \Delta \check{T}, \check{H}_{in}^S, \check{H}_{out}^S, \check{T}_{in}^S, \check{T}_{out}^S, \check{H}_{out}^P, \check{T}_{out}^P, \check{T}_{ave}^P, \check{F}\right),
	\end{equation*}
	that is feasible to (HOPnr1($\underline{x}$, $\overline{x}$)). Note that the differences between (HOP) and (HOPnr1) are the bounds of variables $x$ and $y$. With
	\begin{align*}
		\hat{x_j} &= \lceil \check{x_j} \rceil \in \{\underline{x_j}, \dots, \overline{x_j}\},~ \ \ j=1,\dots,N^S-1, \\
		\hat{y_j} &= \lceil \check{y_j} \rceil \in \{\underline{y_j}, \dots, \overline{y_j}\},~ \ \ j=1,\dots,N^S-1, \\
		\check{H}_{in}^{S_j} &+ \hat{x}_j H^{CP}_j + \Delta \hat{H}^{SP}_j \geq \check{H}_{in}^{S_j} + \check{x}_j H^{CP}_j + \Delta \check{H}^{SP}_j \geq \check{H}_{out}^{S_j} = \hat{H}^{S_j}_{out},~ \ \ j=1,...,N^S-1,
	\end{align*}
	it is easy to verify that
	\begin{equation*}
		\hat{\Psi} \coloneqq \left(\hat{z}, \Delta \hat{H}^{SP}, \Delta \hat{T}, \check{H}_{in}^S, \hat{H}_{out}^S, \check{T}_{in}^S, \check{T}_{out}^S, \check{H}_{out}^P, \check{T}_{out}^P, \check{T}_{ave}^P, \check{F}\right)
	\end{equation*}
	is feasible to (HOP) and $\hat{\Psi}$ is defined by $\hat{s}$. Therefore the proof is completed.
	\Halmos
\endproof

	\proof{Proof of Lemma \ref{lem_monoT}}
	We first prove $\check{T}_{out}^{P_{jr}} \geq \hat{T}_{out}^{P_{jr}},r=0,...,N^P_j,$ by induction. When $r=0$, the conclusion holds according to (\ref{constr_connectH}). Suppose that $\check{T}_{out}^{P_{jr_0}} \geq \hat{T}_{out}^{P_{jr_0}}$.
	Then we show the inequality still holds for $r_0+1$. From (\ref{constr_outTpart}), we have
	\begin{align*}
		\check{T}_{out}^{P_{j,r_0+1}} &= T_g^{P_{j,r_0+1}} + T_f^{P_{j,r_0+1}} + \left[\check{T}_{out}^{P_{jr_0}} - \left(T_g^{P_{j,r_0+1}}+T_f^{P_{j,r_0+1}}\right)\right]e^{-\alpha_{j,r_0+1}L_{j,r_0+1}} \\
		&\geq T_g^{P_{j,r_0+1}} + T_f^{P_{j,r_0+1}} + \left[\hat{T}_{out}^{P_{jr_0}} - \left(T_g^{P_{j,r_0+1}}+T_f^{P_{j,r_0+1}}\right)\right]e^{-\alpha_{j,r_0+1}L_{j,r_0+1}} \\
		&= \hat{T}_{out}^{P_{j,r_0+1}}.
	\end{align*}
	This together with constraint (\ref{constr_aveTpart}) shows that the conclusion of this lemma is true. The proof is completed.
	\Halmos
\endproof

	\proof{Proof of Lemma \ref{lem_sameopt}}
	Since (HOPnr2) is a relaxation of (HOPnr1), which is feasible due to the given condition, the feasibility of (HOPnr2) is obvious. Next, we prove that the $\hat{s}$ defined by
	\begin{align*}
		\hat{z} &= \check{z}, \\
		\Delta \hat{H}^{SP} &= \Delta \check{H}^{SP}, \\
		\Delta \hat{T} &= \Delta \check{T}, \\
		\hat{H}_{out}^{S_j} &= \left\{
			\begin{aligned}
				& \check{H}_{out}^{S_j},
				&& ~\text{if for all}~ r=1,\dots,N^P_j, \\
				&&& \check{F}_{jr} ~\text{and}~ \check{T}_{ave}^{P_{jr}} ~\text{satisfy constraints}~ \eqref{constr_nonlinear}; \\
				& \check{H}_{out}^{S_j} - M_j,
				&& ~\text{if there exists}~ r_0 \in \{1,\dots,N^P_j\},\\
				&&& \check{F}_{jr} ~\text{and}~ \check{T}_{ave}^{P_{jr}} ~\text{do not satisfy constraints}~ \eqref{constr_nonlinear},
			\end{aligned}\right.~ \ \ j=1,\dots,N^S-1
	\end{align*}
	is feasible to (HOPnr1). Here for $j=1,\dots,N^S-1,$ $M_j$ is defined to be
\begin{equation*}
			M_j = \check{H}^{S_j}_{out} - \sum_{t=1}^{r_1} \left[f\left(\check{T}_{ave}^{P_{jr}}, Q_{jt}, D_{jt}\right)L_{jt} + \Delta Z_{jt}\right] - \max\{\check{H}_{out}^{P_{jr_1}}, \tilde{H}_{out}^{P_{jr_1}}\} \end{equation*}
if $\check{H}^{S_j}_{out} - \sum_{t=1}^{r_1} \left[f\left(\check{T}_{ave}^{P_{jr}}, Q_{jt}, D_{jt}\right)L_{jt} + \Delta Z_{jt}\right] - U_{jr_1} > 0$, and $0$ otherwise. $r_1$ and $U_{jr}$ are defined by
	\begin{align*}
		r_1 &= \mathop{\arg\max}_{r=1,\dots,N^P_j}\left\{\check{H}^{S_j}_{out} - \sum_{t=1}^r \left[f\left(\check{T}_{ave}^{P_{jr}}, Q_{jt}, D_{jt}\right)L_{jt} + \Delta Z_{jt}\right] - U_{jr}\right\},\\
		U_{jr} &= \left\{
			\begin{aligned}
				& \overline{H}_{out}^{P_{jr}}, && r=1,\dots,N^P_j-1;\\
				& \overline{H}_{in}^{S_{j+1}}, && r=N^P_j.
			\end{aligned}\right.
	\end{align*}
	$\tilde{H}_{out}^{P_{jr_1}}$ is defined by $\tilde{s}$ which gives $\tilde{T}_{out}^{S_j} = \overline{T}_{out}^{S_j}$.

Now, to show the above $\hat{s}$ is feasible to (HOPnr1), suppose $\hat{s}$ defines a solution $\hat{\Psi}$ of (HOPnr1). Note that the only difference between (HOPnr1) and (HOPnr2) are nonlinear constraints on the friction. For $j=1,\dots,N^S-1$, if for all $r=1,\dots,N^P_j$, $\check{F}_{jr}$ and $\check{T}_{ave}^{P_{jr}}$ satisfy the constraints \eqref{constr_nonlinear}, then we have $\hat{F}_{jr} = \check{F}_{jr}$, $r=1,\dots,N^P_j$. Therefore the feasibility of $\hat{\Psi}$ with this $j$ is obvious due to the feasibility of $\check{s}$. For the case that there exists some $r_0$ such that $\check{F}_{jr}$ and $\check{T}_{ave}^{P_{jr}}$ do not satisfy the constraints \eqref{constr_nonlinear}, we only need to prove that $\hat{H}_{out}^{S_j}$, $\hat{H}_{in}^{S_{j+1}}$ and $\hat{H}_{out}^{P_{jr}}$ satisfy the inequalities \eqref{constr_inoutH}, \eqref{constr_boundHin}, \eqref{constr_boundHout} and \eqref{constr_transitionHead}, $r=1,\dots,N^P_j-1$.

	If $M_j = 0$, then we have $\hat{H}_{out}^{S_j} = \check{H}_{out}^{S_j}$. Thus the constraints \eqref{constr_boundHout} are satisfied. According to the constraints \eqref{constr_newnonlinear}, we also conclude that for each $r=1,\dots,N^P_j$,
	\begin{align*}
		\hat{H}_{out}^{P_{jr}} &= \check{H}_{out}^{S_j} - \sum_{t=1}^r \left[f\left(\check{T}_{ave}^{P_{jr}}, Q_{jt}, D_{jt}\right)L_{jt} + \Delta Z_{jt}\right] \\
		& \geq \check{H}_{out}^{S_j} - \sum_{t=1}^r \left[\check{F}_{jr} + \Delta Z_{jt}\right] = \check{H}_{out}^{P_{jr}}, \\
		\hat{H}_{out}^{P_{jr}} &= \check{H}_{out}^{S_j} - \sum_{t=1}^r \left[f\left(\check{T}_{ave}^{P_{jr}}, Q_{jt}, D_{jt}\right)L_{jt} + \Delta Z_{jt}\right] \\
		&\leq \check{H}_{out}^{S_j} - \sum_{t=1}^{r_1} \left[f\left(\check{T}_{ave}^{P_{jr}}, Q_{jt}, D_{jt}\right)L_{jt} + \Delta Z_{jt}\right]-U_{jr_1}+U_{jr} \leq U_{jr}.
	\end{align*}
	Therefore the bound constraints \eqref{constr_boundHin} and \eqref{constr_transitionHead} are also satisfied.

	For the case that $\check{H}^{S_j}_{out} - \sum_{t=1}^{r_1} \left[f\left(\check{T}_{ave}^{P_{jr}}, Q_{jt}, D_{jt}\right)L_{jt} + \Delta Z_{jt}\right] - U_{jr_1} > 0$, we have $M_j > 0$ due to the feasibility of $\check{s}$ and $\tilde{s}$ on \eqref{constr_transitionHead}. It follows that $\hat{H}_{out}^{S_j} \leq \check{H}_{out}^{S_j}$. Based on Lemma \ref{lem_monoT} and the monotonicity of $f$, we have
	\begin{align*}
		\hat{H}_{out}^{P_{jr}} &= \check{H}_{out}^{S_j} - M_j - \sum_{t=1}^{r} \left[f\left(\check{T}_{ave}^{P_{jr}}, Q_{jt}, D_{jt}\right)L_{jt} + \Delta Z_{jt}\right] \\
		&\leq \check{H}_{out}^{S_j} - M_j - \sum_{t=1}^{r_1} \left[f\left(\check{T}_{ave}^{P_{jr}}, Q_{jt}, D_{jt}\right)L_{jt} + \Delta Z_{jt}\right] - U_{jr_1} + U_{jr} \\
		&= U_{jr} - U_{jr_1} + \max\{\check{H}_{out}^{P_{jr_1}}, \tilde{H}_{out}^{P_{jr_1}}\} \leq U_{jr},~ \ \ r=1,\dots,N^P_j,\\
		\hat{H}_{out}^{P_{jr}} &= \sum_{t=r+1}^{r_1} \left[f\left(\check{T}_{ave}^{P_{jr}}, Q_{jt}, D_{jt}\right)L_{jt} + \Delta Z_{jt}\right] +\max\{\check{H}_{out}^{P_{jr_1}}, \tilde{H}_{out}^{P_{jr_1}}\}\\
		&\geq \tilde{H}_{out}^{S_j} - \sum_{t=1}^{r} \left[f\left(\tilde{T}_{ave}^{P_{jr}}, Q_{jt}, D_{jt}\right)L_{jt} + \Delta Z_{jt}\right] = \tilde{H}_{out}^{P_{jr}},~ \ \ r =0,\dots, r_1-1, \\
		\hat{H}_{out}^{P_{jr}} &= -\sum_{t=r_1+1}^{r} \left[f\left(\check{T}_{ave}^{P_{jr}}, Q_{jt}, D_{jt}\right)L_{jt} + \Delta Z_{jt}\right] +\max\{\check{H}_{out}^{P_{jr_1}}, \tilde{H}_{out}^{P_{jr_1}}\}\\
		&\geq \check{H}_{out}^{S_j} - \sum_{t=1}^{r} \left[\check{F}_{jr} + \Delta Z_{jt}\right] = \check{H}_{out}^{P_{jr}},~ \ \ r =r_1+1,\dots, N^P_j.
	\end{align*}
	With the feasibility of $\check{s}$ and $\tilde{s}$, the constraints \eqref{constr_boundHin}, \eqref{constr_boundHout} and \eqref{constr_transitionHead} are satisfied. Finally, for all the cases analyzed in this proof, we have \begin{equation*}
		\check{H}_{in}^{S_{j+1}} \geq \hat{H}_{in}^{S_{j+1}},~ \ \ \hat{H}_{out}^{S_{j}} \leq \check{H}_{out}^{S_{j}}.
	\end{equation*}
	The constraints \eqref{constr_inoutH} are satisfied for all $j=1,\dots,N^S-1$ since
	\begin{equation*}
		\hat{H}_{in}^{S_j} + \hat{x}_j H^{CP}_j + \Delta \hat{H}^{SP}_j \geq \check{H}_{in}^{S_j} + \check{x}_j H^{CP}_j + \Delta \check{H}^{SP}_j \geq \check{H}_{out}^{S_j} \geq \hat{H}_{out}^{S_j}.
	\end{equation*}
	Therefore $\hat{s}$ is feasible to (HOPnr1) and
	\begin{equation*}
		C(\check{x}, \Delta \check{H}^{SP},\ \ \Delta \check{T}) = C(\hat{x}, \Delta \hat{H}^{SP}, \Delta \hat{T}).
	\end{equation*}
	This completes the proof.
	\Halmos
\endproof

	\proof{Proof of Theorem \ref{thm_equivalence}}
	Suppose $\check{s}^*$ is an optimal solution of (HOPnr2). Denote $C^*$ as the optimal value of (HOPnr1). One one hand, we have
	\begin{equation*}
		C(\check{x}^*, \Delta \check{H}^{SP*}, \Delta \check{T}^*) \leq C^*,
	\end{equation*} since (HOPnr2) is a relaxation of (HOPnr1). On the other hand, based on Lemma \ref{lem_sameopt}, we can obtain a feasible solution $\hat{s}^*$ of (HOPnr1) from $\check{s}^*$ such that
	\begin{equation*}
		C(\check{x}^*, \Delta \check{H}^{SP*}, \Delta \check{T}^*) \geq C(\hat{x}^*, \Delta \hat{H}^{SP*}, \Delta \hat{T}^*).
	\end{equation*}
	Thus we obtain
	\begin{equation*}
		C^* \leq C(\hat{x}^*, \Delta \hat{H}^{SP*}, \Delta \hat{T}^*) \leq C(\check{x}^*, \Delta \check{H}^{SP*}, \Delta \check{T}^*) \leq C^*,
	\end{equation*}
	which proves the statement.
	\Halmos
\endproof

	\proof{Proof of Theorem \ref{thm_finite}}
	The feasibility of the subproblem at each iteration is guaranteed by the preprocessing procedure. Branching on fractional solutions is implemented to generate two subproblems with disjoint feasible regions. So no two nodes in the branch-and-bound tree of Algorithm \ref{alg_branch-and-bound} have the same bounds. The following set
	\begin{equation*}
		\left\{(\underline{z},\overline{z}) \mid 0 \leq \underline{x}_j \leq \overline{x}_j \leq N^{CP}_j,~ 0 \leq \underline{y}_j \leq \overline{y}_j \leq N^{SP}_j,~ j=1,...,N^S-1 \right\}
	\end{equation*}
	contains finite elements. It means that the branch-and-bound tree has a finite number of nodes. Thus the iteration for searching the tree is also finite, which completes the proof.
	\Halmos
\endproof

	\proof{Proof of Theorem \ref{thm_global}}
	We prove it by contradiction. It is obvious that Algorithm \ref{alg_branch-and-bound} always returns an incumbent solution $\tilde{s}$ and $GUB$ of the problem (HOP)  if it is feasible. Suppose $s^*$ is the global optimal solution of the problem (HOP) and $C(x^*, \Delta H^{SP*}, \Delta T^*) < C(\tilde{x}, \Delta \tilde{H}^{SP}, \Delta \tilde{T})$ holds. Since branching will not affect the feasible region of the problem (HOP), we have that there is a node $(\underline{z},\overline{z},LB)$ where $\underline{x} \leq x^* \leq \overline{x}$ pruned by Algorithm \ref{alg_branch-and-bound}. Note that the node whose feasible region contains $s^*$ can not be pruned by bounding or infeasibility. So we obtain that the $\check{x}$ in the global minimizer $\check{s}$ of (HOPnr1($\underline{x},\overline{x}$)) satisfies the integer constraint. Let $\hat{s}$ be the feasible solution of the problem (HOP) achieved from $\check{s}$ by Proposition \ref{prop_upperbound}. Since $\hat{s}$ satisfies that
	\begin{equation*}
		\hat{z} = \lceil \check{z} \rceil = \check{z},~ \ \ \Delta \hat{H}^{SP} = \Delta \check{H}^{SP},\ \ \Delta \hat{T} = \Delta \check{T},
	\end{equation*}
	it follows from the global optimality of $\check{s}$ that
	\begin{equation*}
		C(\tilde{x}, \Delta \tilde{H}^{SP}, \Delta \tilde{T}) = GUB \leq C(\hat{x}, \Delta \hat{H}^{SP}, \Delta \hat{T}) = C(\check{x}, \Delta \check{H}^{SP}, \Delta \check{T}) \leq
		C(x^*, \Delta H^{SP*}, \Delta T^*),
	\end{equation*}
	which contradicts our assumption. So we complete the proof.
	\Halmos
\endproof

	\proof{Proof of Lemma \ref{lem_u0upper}}
	Let
	\begin{equation*}
		h(u) = -\sum_{t=1}^{r_1-r_0}\left[f(\phi_{jt}^{r_0} u+\psi_{jt}^{r_0}, Q_{j,r_0+t}, D_{j,r_0+t}) L_{j,r_0+t} + \Delta Z_{j,r_0+t}\right].
	\end{equation*}
	Note that $f$ is monotonically decreasing and $L_{j,r_0+r} \geq 0 ,r=1,...,r_1-r_0$, we can easily verify that $h$ is monotonically increasing. Based on the constraints \eqref{constr_hchangepart},\eqref{constr_nonlinear},\eqref{constr_outTpart},\eqref{constr_aveTpart} and the feasibility of $\tilde{u}$, we have that
	\begin{equation*}
		h(T_{out}^{P_{jr_0}}) = H_{out}^{P_{jr_1}} - H_{out}^{P_{jr_0}} \leq \overline{H}_{out}^{P_{jr_1}} - \underline{H}_{out}^{P_{jr_0}} = h(\tilde{u}).
	\end{equation*}
	Therefore we obtain $T_{out}^{P_{jr_0}} \leq \tilde{u}$. According to the constraint \eqref{constr_outTpart}, we can prove the statement of the upper bound of $T_{out}^{S_j}$. This completes the proof.
	\Halmos
\endproof

	\proof{Proof of Lemma \ref{lem_nleqfeasible}}
	Under Assumption \ref{assump_strict}, function $h$ defined in the proof of Lemma \ref{lem_u0upper} is strictly monotonically increasing. Let $\tilde{s}$ be a feasible solution to (HOPnr1($\underline{z}$, $\overline{z}$)). Then we have that
	\begin{equation*}
		h(\tilde{T}_{out}^{P_{jr_0}}) = \tilde{H}_{out}^{P_{jr_1}} - \tilde{H}_{out}^{P_{jr_0}} \leq \overline{H}_{out}^{P_{jr_1}} - \underline{H}_{out}^{P_{jr_0}} < h(\hat{u}) \Rightarrow \tilde{T}_{out}^{P_{jr_0}} < \hat{u}.
	\end{equation*}
	Besides, $h$ is continuous in $[\tilde{T}_{out}^{P_{jr_0}},\hat{u}]$ since $f$ is convex. So there exists a unique $\tilde{u}$ such that
	\begin{equation*}
		\tilde{T}_{out}^{P_{jr_0}} < \tilde{u} < \hat{u},~ \ \ h(\tilde{u}) =  \overline{H}_{out}^{P_{jr_1}} - \underline{H}_{out}^{P_{jr_0}}.
	\end{equation*}
	It follows that $\tilde{u}$ is the unique solution of \eqref{nonlineareqn}. The last inequality in the lemma is further proved by $\tilde{u} < \hat{u}$, which completes the proof.
	\Halmos
\endproof

	\proof{Proof of Lemma \ref{lem_r0>r1}} We prove the statement by contradiction. Suppose $\tilde{s}$ is feasible to (HOPnr1($\underline{z}$, $\overline{z}$)). Let
	\begin{equation*}
	h'(u) = -\sum_{t=1}^{r_0-r_1}\left[f(\phi_{jt}^{r_1} u+\psi_{jt}^{r_1}, Q_{j,r_1+t}, D_{j,r_1+t}) L_{j,r_1+t} +\Delta Z_{j,r_1+t}\right].
	\end{equation*}
	Similar to the proof of Lemma \ref{lem_u0upper}, we know that $h'$ is monotone. Moreover, we know that $\hat{u}$ is an upper bound of the variable $T_{out}^{P_{jr_1}}$. Then we have that
	\begin{equation*}
		h'(\hat{u}) < \underline{H}_{out}^{P_{jr_0}} - \overline{H}_{out}^{P_{jr_1}} \leq \tilde{H}_{out}^{P_{jr_0}} - \tilde{H}_{out}^{P_{jr_1}} = h'(\tilde{T}_{out}^{P_{jr_1}}) \Rightarrow \hat{u} < \tilde{T}_{out}^{P_{jr_1}},
	\end{equation*}
	which contradicts the feasibility of $\tilde{s}$. This completes the proof.
	\Halmos
\endproof

	\proof{Proof of Theorem \ref{thm_preprocess}}
	We first prove that the strengthening step \ref{algstep_tightH} is correct. Then we verify the correctness of steps \ref{algstep_infeasible1}, \ref{algstep_infeasible2} and \ref{algstep_infeasible3}. We finally give the feasible solution $\tilde{s}$ satisfies the condition in the statement.
	
	From Lemmas \ref{lem_u0upper} and \ref{lem_nleqfeasible} and the constraints \eqref{constr_outTpart}, we have that the upper bound strengthening on variable $T_{out}^{S_j}$ in step \ref{algstep_tightT} of Algorithm \ref{alg_preprocess} is correct and $\overline{T}_{out}^{S_j}$ is monotonically decreasing during the preprocessing procedure. We show the correctness of tightening variable $H_{out}^{S_j}$ by contradiction. Suppose there exists a feasible solution $\tilde{s}$ of (HOPnr1($\underline{z}$, $\overline{z}$)) before preprocessing and $\tilde{H}_{out}^{S_j} < \underline{H}_{out}^{S_j}$ holds, where $\underline{H}_{out}^{S_j}$ is obtained after step \ref{algstep_tightH}. According to steps \ref{algstep_fixH} and \ref{algstep_liftH}, it is true that
	\begin{align*}
		\tilde{H}_{out}^{S_j} < \underline{H}^{S_j}_{out} &= \max_{r=0,\dots,N^P_j}\left\{\underline{H}_{out}^{P_{jr}}+\sum_{t=1}^r\left[f(\phi_{jt}^{0} \overline{T}_{out}^{S_j}+\psi_{jt}^{0}, Q_{jt}, D_{jt}) L_{jt} +\Delta Z_{jt}\right]\right\}.
	\end{align*}
	Then there exists some $r_2$ such that
	\begin{align*}
		\tilde{H}_{out}^{S_j} &= \tilde{H}_{out}^{P_{jr_2}}+\sum_{t=1}^{r_2}\left[f(\phi_{jt}^{0} \tilde{T}_{out}^{S_j}+\psi_{jt}^{0}, Q_{jt}, D_{jt}) L_{jt} +\Delta Z_{jt}\right] \\
		&< \underline{H}_{out}^{P_{jr_2}}+\sum_{t=1}^{r_2}\left[f(\phi_{jt}^{0} \overline{T}_{out}^{S_j}+\psi_{jt}^{0}, Q_{jt}, D_{jt}) L_{jt} +\Delta Z_{jt}\right].
	\end{align*}
	From the monotonicity of $f$, it follows that $\tilde{H}_{out}^{P_{jr_2}} < \underline{H}_{out}^{P_{jr_2}}$ or $\tilde{T}_{out}^{S_j} > \overline{T}_{out}^{S_j}$. Either of them contradicts the feasibility of $\tilde{s}$.
	The correctness of the tightening of $H_{in}^{S_j}$ and $T_{in}^{S_j}$ in step \ref{algstep_tightH} is guaranteed by the constraints \eqref{constr_inoutH}, \eqref{constr_inoutT} and \eqref{constr_temprisebound}.
	
	The infeasibility judgements in steps \ref{algstep_infeasible1} and \ref{algstep_infeasible2} are true due to Lemmas \ref{lem_r0>r1} and \ref{lem_nleqfeasible}. It is obvious that step \ref{algstep_infeasible3} is also correct.
	
	When Algorithm \ref{alg_preprocess} terminates, we show that $\tilde{s}$ with
	\begin{align*}
		\tilde{z} = \overline{z},~ \Delta \tilde{H}_j^{SP} = \overline{y}_jH_j^{SP},~  \tilde{H}_{out}^{S_j} = \underline{H}_{out}^{S_j},~ j=1,...,N^S-1,
	\end{align*}
	and $\Delta T_j$ is determined by $T_{out}^{S_j} = \overline{T}_{out}^{S_j}$ and the equality constraints \eqref{constr_outTpart} and \eqref{constr_aveTpart}. Therefore $\tilde{s}$ satisfies the condition in the theorem. Note that after each iteration $j$, we obtain a partial solution with $H_{out}^{S_j} = \underline{H}_{out}^{S_j},~ T_{out}^{S_j} = \overline{T}_{out}^{S_j}$. The partial solution is equal to the corresponding components of $\tilde{\Psi}$ defined by $\tilde{s}$. Therefore the feasibility of $\tilde{s}$ can be guaranteed by the procedure of Algorithm \ref{alg_preprocess}.
	This completes the proof.
	\Halmos
\endproof

\end{APPENDIX}

\end{document}